\newcommand{\D}{\mathbb D}
\newcommand{\C}{\mathbb C}
\newcommand{\Z}{\mathbb Z}
\renewcommand{\H}{\mathcal H}
\newcommand{\ddbar}{\partial\bar\partial}
\newcommand{\dbar}{\bar\partial}
\renewcommand{\Re}{\operatorname{Re}}
\newtheorem{theorem}{Theorem}
\newtheorem{proposition}[theorem]{Proposition}
\newtheorem{lemma}[theorem]{Lemma}
\theoremstyle{definition}
\newtheorem{definition}[theorem]{Definition}
\title{Interpolating and sampling sequences in finite Riemann surfaces}
\author{Joaquim Ortega-Cerd\`a}
\address{Dept.\ Matem\`atica Aplicada i An\`alisi, Universitat  de Barcelona,
Gran Via 585, 08071 Bar\-ce\-lo\-na, Spain}
\email{jortega@ub.edu}
\date{Working draft: \today}
\thanks{Supported by DGICYT grant MTM2005-08984-C02-02 and the CIRIT grant
2005SGR00611}
\keywords{} 
\subjclass{}
\begin{document}
\maketitle
\begin{abstract}
We provide a description of the interpolating and sampling sequences on 
a space of holomorphic functions with a uniform growth restriction defined on 
finite Riemann surfaces. 
\end{abstract}

\section{Introduction and statement of the results}
Let $S$ be an open finite Riemann surface endowed with the Poincar\'e
(hyperbolic) metric. We will study some properties of holomorphic functions in
the Riemann surface with uniform growth control. Namely we will deal with the
Banach space $A_\phi(S)$ of holomorphic functions in $S$ such that
$\|f\|:=\sup_S |f|e^{-\phi}<\infty$ where $\phi$ is  a given subharmonic
function that controls the growth of the functions in the space.

The fact that $\phi$ is subharmonic is a natural assumption on the weight that
limits the growth since any other growth control given by a weight $\psi$,
$\|f\|_*=\sup_S |f|e^{-\psi}$ can be replaced by an equivalent subharmonic
function because $\phi=\sup_{\|f\|_*\le 1}\log|f|$ is a subharmonic function
and $A_\psi(S)=A_\phi(S)$ with equality of norms, $\sup_S |f|e^{-\psi}= \sup_S
|f|e^{-\phi}$.

We have fixed a metric. It is then natural to restrict the possible weights
$\phi$, in a way that the functions in $A_\phi$ oscillate in a controlled way
when the points are nearby in the Poincar\'e metric. This is achieved for
instance by assuming that $\phi$ has  bounded Laplacian (the Laplace-Beltrami
operator with respect to the hyperbolic measure). That is, if in a local
coordinate chart the Poincar\'e metric is of the form $ds^2= e^{2\nu(z)}|dz|^2$,
then we assume that $\Delta\phi = 4 e^{-2\nu(z)} \frac{\partial^2\phi}{\partial
z\partial \bar z}$ satisfies $C^{-1}\le \Delta \phi \le C$. If we want to deal
with other weights then it is possible to introduce a natural metric associated
to the weight as it is done in the plane in \cite{MarMasOrt03}. In this work we
will only consider the Poincar\'e metric and bounded Laplacian since it already
covers many interesting cases and it is technically simpler.

The problems that we will consider are the following:
\begin{enumerate}
\item[(A)] The description of the interpolating sequences for $A_\phi(S)$: i.e.
the sequences $\Lambda\subset S$ such that it is always possible to find an
$f\in A_\phi(S)$ such that $f(\lambda)=v_\lambda$ for all $\lambda\in \Lambda$
whenever the data $\{v_\lambda\}_{\Lambda}$, satisfies  the compatibility
condition $\sup_\Lambda |v_\lambda|e^{-\phi(\lambda)}<+\infty$ 
\item[(B)] The description of sampling sets for $A_\phi(S)$: i.e. the sets
$E\subset S$ such that there is a constant $C>0$ that satisfies
\[
\sup_{S} |f|e^{-\phi}\le C \sup_{E} |f|e^{-\phi},\quad \forall f\in A_\phi(S).
\]
\end{enumerate}

In the solution of these problems the Poincar\'e distance  and the potential
theory in the surface play a key role. This has already been observed by
A.~Schuster and D.~Varolin in \cite{SchVar04}, where they provide sufficient
conditions for a sequence to be interpolating/sampling for functions in a
slightly different context where the weighted uniform control of the growth of
the functions is replaced by a weighted $L^2$ control. Their condition basically
coincides with the description that we reach so our work can be considered as
the counterpart of their theorems, although we will give a different proof of
their results as well. We will rely on the well-known case of the disk and some
simplifying properties of finite Riemann surfaces. Their method  of proof looks
more promising if one wants to extend the result to Riemann surfaces with more
complicated topology.

When the surface is a disk, which will be our model situation, the corresponding
problems have been solved in \cite{BerOrt95}, \cite{OrtSei98} and in a different
way in \cite{Seip98}. Of course, the more basic problem of describing the
interpolating sequences for bounded holomorphic functions in finite Riemann
surfaces (in our notation $\phi\equiv 0$), has been known for a long time, see
\cite{Stout65}).

We introduce now some definitions that will be needed to state our results. For
any point $z\in S$ and any $r>0$ we denote by $D(z,r)$ the domain in the surface
$S$ that consits of points at hyperbolic distance from $z$ less than $r$. They
are topological disks if the center $z$ is outside a big compact of $S$, or if
$r$ is small enough, as we will see in Section~\ref{RS}.

A sequence $\Lambda$ of points in $S$ is hyperbolically separated if  there is
an $\varepsilon>0$ such that the domains
$\{D(\lambda,\varepsilon)\}_{\lambda\in\Lambda}$ are pairwise disjoint.

Let $g_r(z,w)$ be the Green function associated to the surface $D(z,r)$ with
pole at the ``center'' $z$ and $g(z,w)=g_\infty(z,w)$ be the Green function
associated to the surface $S$. We define the densities
\begin{equation}\label{densities}
\begin{split}
D^+_\phi(\Lambda):=\limsup_{r\to\infty} \sup_{z\in S} \frac{\displaystyle
\sum_{\begin{subarray}{c}1/2<d(z,\lambda)<r\\
\lambda\in\Lambda\end{subarray}}g_r(z,\lambda)}
{\displaystyle\int_{D(z,r)}g_r(z,w)i\ddbar\phi(w) }.\\ 
D^-_\phi(\Lambda):=\liminf_{r\to\infty} \inf_{z\in
S}\frac{\displaystyle \sum_{\begin{subarray}{c}1/2<d(z,\lambda)<r\\
\lambda\in\Lambda\end{subarray}}g_r(z,\lambda)}
{\displaystyle\int_{D(z,r)}g_r(z,w)i\ddbar\phi(w) }.
\end{split}
\end{equation}

The main result is
\begin{theorem}\label{main}
Let $S$ be a finite Riemann surface and let $\phi$ be a
subharmonic function with bounded Laplacian. 
\begin{enumerate} 
\item [(A)]A sequence $\Lambda\subset S$ is an interpolating sequence for
$A_\phi(S)$ if and only if it is hyperbolically separated and
$D^+_\phi(\Lambda)<1$. 
\item[(B)] A set $E\subset S$ is a sampling set for $A_\phi(S)$ if and only if
it contains an hyperbolically separated sequence
$\Lambda\subset E$ such that $D^-_\phi(\Lambda)>1$.
\end{enumerate}
\end{theorem}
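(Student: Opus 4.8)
The plan is to transfer the known disk results of \cite{BerOrt95}, \cite{OrtSei98} to $S$ by combining a conformal comparison of Green functions with a weighted $\dbar$-construction. The densities in \eqref{densities} are built from the Green functions $g_r$, which are conformally invariant, so the first task (using the geometry of Section~\ref{RS}) is to understand the disks $D(z,r)$ uniformly in the center $z$. For $z$ outside a large compact subset of $S$, or for small $r$, the domain $D(z,r)$ is a topological disk; uniformizing it to $\D$ carries the Poincar\'e metric to the standard one with bounded distortion and turns $\sum_\lambda g_r(z,\lambda)$ and $\int_{D(z,r)} g_r\, i\ddbar\phi$ into the corresponding Green potentials on $\D$. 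This identifies $D^+_\phi(\Lambda)$ and $D^-_\phi(\Lambda)$, up to bounded comparison, with the Beurling-type densities that govern the disk theory, so that the critical value $1$ matches. The only centers requiring separate care lie in the fixed compact core, where the finitely many ``long'' geodesic disks need not be simply connected; there I would invoke the Riesz decomposition, reading $\int_{D(z,r)} g_r(z,w)\, i\ddbar\phi(w)$ as the defect between $\phi(z)$ and the harmonic extension of its boundary values, which stabilizes as $r\to\infty$ because $g_r\to g$.

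\emph{Necessity.} That an interpolating $\Lambda$ is hyperbolically separated is standard: the restriction map $A_\phi(S)\to \ell^\infty_\phi(\Lambda)$ is surjective, so by the open mapping theorem every datum of norm one is the restriction of some $f$ with $\|f\|\le C$, and a Schwarz-type estimate then shows that two points at vanishing hyperbolic distance cannot carry the incompatible data $v_\lambda=e^{\phi(\lambda)}$, $v_{\lambda'}=0$. For the bound $D^+_\phi(\Lambda)<1$ I would argue by contradiction: if $D^+_\phi(\Lambda)\ge 1$, then along centers $z_n$ and radii $r_n\to\infty$ the part of $\Lambda$ inside $D(z_n,r_n)$ is at least critically dense; transplanting to $\D$ via the uniformization above and applying the disk necessity theorem produces data that cannot be interpolated with uniform norm, and pulling this obstruction back to $S$ contradicts interpolation. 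The sampling necessity is the mirror image: a sampling $E$ contains a separated $\Lambda\subset E$ that is itself sampling, and the disk comparison forces $D^-_\phi(\Lambda)>1$.

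\emph{Sufficiency.} Here I would run the weighted $\dbar$-method. Given data with $\sup_\Lambda |v_\lambda|e^{-\phi(\lambda)}\le 1$, choose a smooth quasi-interpolant $F$ with $F(\lambda)=v_\lambda$, supported in disjoint hyperbolic disks of radius $\sim\varepsilon$ about the $\lambda$'s, so that $|F|e^{-\phi}$ is bounded and $\dbar F$ is controlled. Solve $\dbar u=\dbar F$ with the singular weight $\psi=\phi-2\sum_{\lambda}g(\cdot,\lambda)$; since $e^{-\psi}\sim e^{-\phi}|z-\lambda|^{-2}$ near each pole, finiteness of $\int_S|u|^2e^{-\psi}$ forces $u(\lambda)=0$, whence $f:=F-u$ is holomorphic and interpolates the data. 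The role of $D^+_\phi(\Lambda)<1$ is exactly to keep $\psi$ admissible: by the Green-potential identity it guarantees $(1-\varepsilon)\int_{D(z,r)} g_r\, i\ddbar\phi\ge \sum_\lambda g_r(z,\lambda)$ uniformly, so the Green potential of $\Lambda$ is dominated by that of $i\ddbar\phi$ and $\psi$ differs from $\phi$ by a function bounded above off the poles; this yields the H\"ormander estimate and the passage from the $L^2(e^{-\psi})$ bound to the required $L^\infty(e^{-\phi})$ bound on $f$. For sampling I would instead use a normal-families/limiting argument: if the sampling inequality failed, a renormalized sequence $f_n$ concentrating its supremum near points $z_n$ would converge, after uniformizing $D(z_n,r_n)$, to a nonzero holomorphic limit in the disk model vanishing on a set of lower density $\ge 1$, contradicting the disk sampling theorem; the condition $D^-_\phi(\Lambda)>1$ is precisely what makes the limit set a uniqueness set.

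\emph{Main obstacle.} I expect the principal difficulty to be the weighted $\dbar$-estimate on $S$ with the singular weight $\psi$, together with the $L^2\to L^\infty$ conversion: one must control $\sum_\lambda g(\cdot,\lambda)$ uniformly, not merely in the $\limsup$ defining $D^+$, and handle the finitely many ends where the Poincar\'e geometry degenerates, so that the density threshold $1$ is attained uniformly in the center $z$. The end-behavior, rather than the compact core, is where the comparison with the disk must be made most carefully.
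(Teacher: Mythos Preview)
Your approach is sound but takes a genuinely different route from the paper. The paper does \emph{not} run a H\"ormander-type $\dbar$ scheme with the singular weight $\psi=\phi-2\sum_\lambda g(\cdot,\lambda)$; instead it proves a local-to-global principle. For interpolation (Proposition~\ref{interpolation}) it shows that a separated $\Lambda$ is interpolating for $A_\phi(S)$ if and only if each $\Lambda_i=\Lambda\cap A_i$ is interpolating in the funnel $A_i$, by interpolating locally and gluing with Theorem~\ref{dbar} through an iterative correction; for sampling (Theorem~\ref{sampling}) it establishes the analogous equivalence with the associated disks $(D_i,\phi_i)$ via a Beurling weak-limit characterization (Theorem~\ref{beurling}) combined with Lemmas~\ref{uniqueness} and~\ref{disc}. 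The density conditions then follow by quoting the known disk theorems of \cite{BerOrt95}, \cite{OrtSei98} and checking that the densities on $S$ and on the $D_i$ agree near the boundary because $|g_r-g|\to 0$ there. Your direct method is essentially the strategy of \cite{SchVar04}, which the paper explicitly cites as an alternative; it bypasses the weak-limit machinery at the price of the uniform Green-potential estimate and the $L^2\to L^\infty$ conversion you correctly flag as the main obstacle. One caution on your necessity sketch: ``pulling back'' a non-interpolation obstruction from the uniformized disk to $S$ is not automatic, since an obstruction for $A_{\phi^*}(\D)$ need not persist when the admissible functions are restrictions from $A_\phi(S)$; the cleaner argument, and the one that actually works directly on $S$, is the Jensen--Green computation applied to the peak functions $f_\lambda$ (from the open mapping theorem) and the Blaschke-type factors $h_z$ of Proposition~\ref{blashcke}, which yields $D^+_\phi(\Lambda)<1$ without any transplant.
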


In Section~\ref{RS} we will prove some key properties of finite Riemann
surfaces. In particular we need to study the behavior of the hyperbolic metric
as we approach the boundary of the surface. We will also prove some weighted
uniform estimates for the inhomogeneous Cauchy-Riemann equation in the surface,
Theorem~\ref{dbar}, that has an interest by itself.

In the next section, we use the tools and Lemmas proved in Section~\ref{RS} to
reduce the interpolating and sampling problem in $S$ to a problem near the
boundary that can be reduced to the known case of the disk.

Finally in Section~\ref{Lebesgue} we show how our results can be extended to
other Banach spaces of holomorphic functions where the uniform growth is
replaced by weighted $L^p$ spaces. 

A final word on notation. By $f\lesssim g$ we mean that there is a constant $C$
independent of the relevant variables such that $f\le C g$ and by $f\simeq g$ we
mean that $f\lesssim g$ and $g\lesssim f$.

\section{Basic properties of finite Riemann surfaces}\label{RS}
We start by the definition and then we collect some properties of $S$ that 
follow from the restrictions 
that we are assuming on the topology of $S$.
\begin{definition}
A finite Riemann Surface is  the interior of a smooth bordered compact Riemann
surface. 
\end{definition}
Our surface is an open Riemann surface and it is in fact an open subset of a
compact surface  (the double, see \cite{SchSpe54}). See Figure~\ref{fsurface}
for a typical representation. Observe that the genus is finite and the border of
the surface consists of a finite number of smooth closed Jordan curves. In most
of what follows the particular case of a smooth finitely connected open set  in
$\C$ has all the difficulties of the general case. 

The following claim follows from instance from \cite[Prop 7.1-7.4]{Scheinberg78}
\begin{lemma}\label{dbaracotat}
For any $(0,1)$-form $\omega$ there is a solution $u$ to the inhomogeneous
Cauchy-Riemann equation $\dbar u =\omega$. Moreover since $S$ has an essential
extension to a compact Riemann surface if the data is a smooth form with compact
support $K$ in $S$ then there is a bounded linear solution $u=T[w]$  with the
bound $|u|\le C_K \langle\omega\rangle$. 
\end{lemma}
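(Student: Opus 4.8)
The plan is to separate the two assertions. For the mere existence of a solution to $\dbar u=\omega$ for an arbitrary (smooth) $(0,1)$-form, I would invoke the classical fact that an open Riemann surface is Stein (Behnke--Stein), so that $H^1(S,\mathcal O)=0$ and $\dbar$ is surjective onto $(0,1)$-forms; alternatively this follows by exhausting $S$ by relatively compact subsurfaces and patching the compactly supported solutions produced below with a Runge/Mittag--Leffler approximation. The substantive claim is the existence of a \emph{bounded linear} solution operator for data supported in a fixed compact $K$, and for this I would exhibit $T$ as an explicit integral operator
\[
T[\omega](z)=\int_S \kappa(z,w)\wedge\omega(w)
\]
built from a globally defined Cauchy kernel $\kappa$, reading the bound off the local structure of $\kappa$.

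To construct $\kappa$, embed $\bar S$ in its compact double $\hat S$; since the extension is essential, $\Omega:=\hat S\setminus \bar S$ has nonempty interior. On the compact surface $\hat S$ classical theory (Riemann--Roch; differentials of the third kind, or a prime-form/theta construction) produces, for each parameter $z$, a meromorphic $(1,0)$-form $\kappa(z,\cdot)$ in $w$ that depends holomorphically on $z$ off the diagonal and has a single simple pole along $w=z$, normalized so that $\kappa(z,w)=\frac{1}{2\pi i}\frac{dw}{w-z}+O(1)$ in a local coordinate. The residue theorem forbids a lone simple pole on a compact surface, so $\kappa$ must carry compensating singularities (and, in positive genus, a fixed holomorphic ambiguity in $H^0(\hat S,\Omega^1)$); the point is to park all of these at fixed points of $\Omega$, i.e.\ off $\bar S$, where they cannot meet data supported in $S$.

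With this $\kappa$, the diagonal singularity is of size $|w-z|^{-1}$, hence locally integrable in real dimension two, so $T[\omega]$ is well defined, and the standard Cauchy--Pompeiu computation gives $\dbar_z T[\omega]=\omega$: differentiating under the integral, $\dbar_z$ annihilates $\kappa(\cdot,w)$ away from the diagonal by holomorphicity, while on the diagonal it reproduces $\omega$ through the planar identity $\dbar(1/z)=\pi\delta_0$. For the bound, suppose $\operatorname{supp}\omega\subset K\Subset S$. The compact $K$ stays at positive distance from both $\partial S$ and the auxiliary points in $\Omega$, so $\kappa(z,w)$ is uniformly bounded on $\{(z,w):\ z\in\bar S,\ w\in K,\ \dist(z,w)\ge\varepsilon\}$ (a compact set missing all poles), while the contribution of the region $\dist(z,w)<\varepsilon$ is at most $\langle\omega\rangle\int_{\dist(z,w)<\varepsilon}|w-z|^{-1}\,dA\lesssim \varepsilon\,\langle\omega\rangle$. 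Adding the two pieces gives $|T[\omega](z)|\le C_K\langle\omega\rangle$ uniformly in $z\in S$, with $C_K$ depending only on $\operatorname{area}(K)$, $\dist(K,\partial S)$ and $\dist(K,\Omega)$, not on $\omega$; linearity of $T$ is evident.

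The main obstacle is the residue obstruction on the compact double: no meromorphic $(1,0)$-form has a single simple pole, so the Cauchy kernel is forced to acquire auxiliary singularities and, for positive genus, holomorphic correction terms. The entire argument hinges on the essentialness of the extension, which provides room in $\hat S\setminus\bar S$ to absorb these unwanted singularities away from $\bar S$; they then never interact with compactly supported data and, sitting at a fixed positive distance from $K$, only inflate the constant $C_K$. A secondary point to handle carefully is the uniformity of the bound as $z$ approaches $\partial S$, which is exactly why one needs $\kappa(z,w)$ to remain bounded for $z\in\bar S$ and $w$ in the fixed compact $K$.
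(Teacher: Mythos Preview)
Your argument is correct. Note, however, that the paper does not supply its own proof of this lemma: it simply cites \cite[Prop.~7.1--7.4]{Scheinberg78} and moves on. What you have written is a self-contained sketch of the classical construction underlying such citations --- building a Cauchy kernel on the compact double $\hat S$ and exploiting the essentialness of the extension to park the forced auxiliary poles in $\hat S\setminus\bar S$, then reading the bound from the local $|w-z|^{-1}$ singularity plus a compactness argument on the far piece. This is the standard route and matches the spirit of the paper's one-line justification that ``$S$ has an essential extension to a compact Riemann surface.'' The only place worth tightening is the \emph{global} holomorphic dependence of $\kappa(z,\cdot)$ on the parameter $z$ across all of $\bar S$ (not just locally near the diagonal); fixing a single balancing pole $p_0\in\Omega$ and taking $\kappa(z,w)=\tfrac{1}{2\pi i}\,\omega_{z,p_0}(w)$ with $\omega_{z,p_0}$ the normalized differential of the third kind (vanishing $A$-periods) makes this explicit via the reciprocity law, and then your estimate goes through as written.
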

In this statement and in the following $\langle \omega\rangle$ is the Poincar\'e
length of the $(0,1)$-form $\omega$.

In the disk we have Blaschke factors that are very convenient to divide out
zeros of holomorphic functions without changing essentially the norm. The
analogous functions that provide us with the same property in the case of finite
Riemann surfaces are given by the next proposition:
\begin{proposition}\label{blashcke} There is a constant $C=C(S)>0$ such that
for any point $z\in S$ there is a function $h_z\in \H(S)$ with
\[
\sup_{w\in S} |\log|h_z(w)|-g(z,w)|<C.
\] 
In particular $h_z(w)$ is a bounded
holomorphic function that vanishes only on the point $z$ and for any
$\varepsilon>0$  $K>|h_z(w)|>C(\varepsilon)$ if $d(z,w)>\varepsilon$.
\end{proposition}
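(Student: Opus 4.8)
The plan is to build $h_z$ as the exponential of a multivalued holomorphic primitive of the Green function, after correcting its periods by a harmonic function that is bounded \emph{uniformly} in $z$. Throughout, $g(z,\cdot)$ denotes the Green function normalized so that it is negative, harmonic on $S\setminus\{z\}$, vanishes on $\partial S$, and satisfies $g(z,w)=\log|w-z|+O(1)$ near the pole (this is the normalization forced by the statement, since $h_z$ is to be bounded and to vanish at $z$). First I would form the conjugate differential $*dg(z,\cdot)$, which is closed on $S\setminus\{z\}$, and the local harmonic conjugate $\tilde g(z,\cdot)$, so that $\Phi_z:=g(z,\cdot)+i\tilde g(z,\cdot)$ is a multivalued holomorphic function with $\Re\Phi_z=g$. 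Around a small loop encircling $z$ the period of $\tilde g$ is exactly $2\pi$ (it is $\arg(w-z)$ plus a smooth term), so this period is already in $2\pi\Z$ and, upon exponentiating, produces a genuine simple zero at $z$. The remaining periods are $P_j(z):=\int_{\gamma_j}*dg(z,\cdot)$, the fluxes of $g$ across a basis $\gamma_1,\dots,\gamma_n$ of $H_1(S;\Z)$ ($n=\operatorname{rank}H_1(S)$), and these need not be integer multiples of $2\pi$.

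To fix the periods I would introduce a finite-dimensional real space $V$ of \emph{bounded} harmonic functions on $S$ with continuous boundary values whose conjugate-period map $u\mapsto\bigl(\int_{\gamma_j}*du\bigr)_{j}\in\R^n$ is an isomorphism onto $\R^n$. Concretely one takes the harmonic measures of the boundary curves of $S$ together with the real parts of a basis of abelian differentials inherited from the double of $S$ (recall from the discussion after the definition that $S$ sits in its compact double, cf.\ \cite{SchSpe54}); the fact that the resulting period matrix is nondegenerate is classical (Riemann bilinear relations together with positivity of the capacity matrix). For each $z$ I then choose $u_z\in V$ so that $g(z,\cdot)+u_z$ has all of its conjugate periods $P_j(z)+\int_{\gamma_j}*du_z$ in $2\pi\Z$. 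Setting $h_z:=\exp(\Phi_z+U_z)$, where $U_z$ is the holomorphic primitive of the correction, all conjugate periods now lie in $2\pi\Z$, so $h_z$ is single-valued and holomorphic on $S$, with $\log|h_z|=g(z,\cdot)+u_z$ and hence $\sup_S\bigl|\log|h_z|-g(z,\cdot)\bigr|=\sup_S|u_z|$.

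The crux, and the step I expect to be the main obstacle, is the \emph{uniform} bound $\|u_z\|_{L^\infty(S)}\le C$ independent of $z$. The point is that the periods only need to be corrected modulo $2\pi$, so $u_z$ can always be selected with its period vector lying in one fixed bounded fundamental domain of the lattice $2\pi\Z^n\subset\R^n$; since the period map on the finite-dimensional $V$ is a fixed linear isomorphism, $u_z$ then ranges over a bounded subset of $V$, and a bounded set in a finite-dimensional space of functions that are uniformly bounded on $S$ gives a uniform sup-norm bound. For this selection to be legitimate I must check that $z\mapsto P_j(z)$ is continuous and extends continuously to $\partial S$: this holds because $g(z,\cdot)\to 0$ as $z\to\partial S$, so $P_j(z)\to 0$, and $S$ is relatively compact in its double; thus the correction stays controlled all the way up to the boundary, which is exactly where uniformity could fail.

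Finally I would read off the remaining assertions. Away from $z$ the function $\log|h_z|=g+u_z$ is finite, so $h_z$ has no zeros there and only the simple zero at $z$. Since $g\le 0$ and $|u_z|\le C$, we get $|h_z|\le e^{C}=:K$, so $h_z$ is bounded. For the lower bound, for $d(z,w)>\varepsilon$ one has a uniform estimate $g(z,w)\ge -c(\varepsilon)$ — obtained by comparison with the disk model through the hyperbolic-metric estimates developed in this section — whence $|h_z(w)|\ge e^{-c(\varepsilon)-C}=:C(\varepsilon)>0$. (One could alternatively start from a local holomorphic model with a simple zero at $z$ and correct it by solving an inhomogeneous $\dbar$-equation via Lemma~\ref{dbaracotat}, but the period approach has the advantage of producing the uniform bound directly, since it perturbs $g$ itself, whose boundary behavior is already controlled.)
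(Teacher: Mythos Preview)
Your argument is correct and follows essentially the same route as the paper: correct the conjugate periods of $g(z,\cdot)$ by a bounded harmonic function chosen modulo the period lattice, then exponentiate, so that the uniform bound on $\bigl|\log|h_z|-g(z,\cdot)\bigr|$ falls out of the finite-dimensionality of the correcting space. The paper shortcuts your construction of $V$ by quoting \cite[Lemma~1]{Wermer64}, which directly supplies zero-free $h_j\in H^\infty(S)$ with $\int_{\gamma_i}*d\log|h_j|=\delta_{ij}$, and applies the correction to the harmonic function $g(z,\cdot)-\log|k_z|$ for an auxiliary $k_z\in\H(S)$ vanishing only at $z$; note also that your continuity discussion of $z\mapsto P_j(z)$ is unnecessary, since the reduction modulo $2\pi\Z^n$ alone already confines $u_z$ to a fixed bounded subset of $V$, independently of $z$.
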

\begin{proof}
The obstruction for an harmonic function $u$ to have an harmonic conjugate is
that for a  set of generators $\{\gamma_i\}_{i=1}^m$
 of the homology we have $\int_{\gamma_i} *d u =0$,  $i=1,\ldots,m$. If we want
$u=\log|f|$ for an $f\in \mathcal H(S)$, we just need that  
$\int_{\gamma_i} *d u \in\mathbb Z$.

Being a finite Riemann surface there are $\{h_j\}_{j=1}^n$ functions in the
algebra of $S$ without zeros such that $\int_{\gamma_i}
*d\log|h_j|=\delta_{ij}$, see \cite[Lemma~1]{Wermer64}. Thus the function 
\[
v(z)=u(z)-\sum_i \Bigl(\int_{\gamma_i} *d u\Bigr) \log|h_i(z)|
\]
is the logarithm of an holomorphic function $\log|f|=v$. Therefore there is a
constant $C$ such that any harmonic function $u$ in $S$ admits an holomorphic
function $f$ with $|u-\log|f||<C$. Take a point $z\in S$ and any holomorphic
function $k_z\in \mathcal H(S)$ that vanishes only on  $z$. Then
$g(z,w)-\log|k_z(w)|$ is harmonic in $S$ and therefore there is a holomorphic
function $f_z$ such that $|g(z,w)-k_z(w)-\log |f_z| |<C$. Thus we may define
$h_z(w)=f_z(w) k_z(w)$ and it has the estimate $|g(z,w)-\log |h_z| |<C$. The
estimate  $|g(z,w)|>C(\varepsilon)$ when $d(z,w)>\varepsilon$ holds in finite
Riemann surfaces, see for instance \cite[Theorem~5.5]{Diller95}. 
\end{proof}

\begin{figure}
\begin{center}
\includegraphics{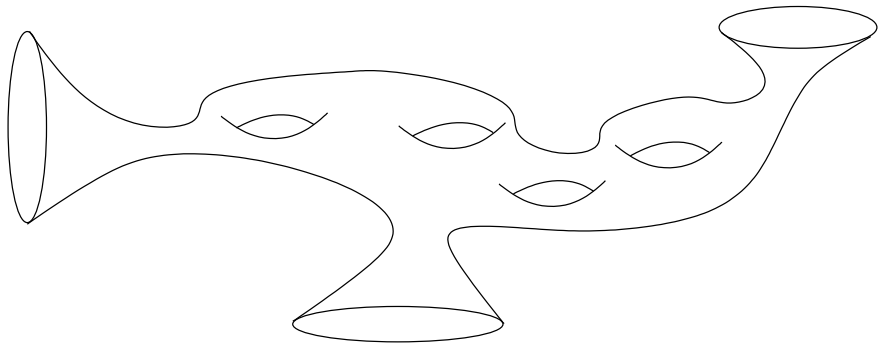}
\end{center}
\caption{A finite Riemann surface with three funnels}\label{fsurface}
\end{figure}

\subsection{The hyperbolic metric in a finite Riemann surface}
The open ends of the Riemann surface can be  parametrized as follows:
The border of the Riemann surface $S$ is a finite union of
smooth closed curves $\tilde\gamma_i$, $i=1,\ldots,n$. Near each
$\tilde\gamma_i$ there is a closed geodesic $\gamma_i$ that is homotopic to
$\tilde\gamma_i$. The subdomain of $S$ bounded by $\gamma_i$ and $\tilde
\gamma_i$ is denoted a ``funnel'' following the terminology of
\cite{DPRS87} and \cite{Diller01}. 

We need to be more precise about the hyperbolic metric in the funnel. There are
nice  coordinates in the funnel that provide good estimates. These are given by
the collar theorem. Let $\D$ be the universal holomorphic cover of $S$ and let
$T_\gamma\in \operatorname{Aut}(\D)$ be the deck transformation corresponding to
the closed loop $\gamma$. Consider the surface $Y=\D/\{T_\gamma^n\}_{n\in\Z}$.
This an annulus since $\pi_1(Y)=\Z$. If we quotient it by the rest of the deck
transformations of the universal cover we get an holomorphic covering map
$\pi_{\gamma}$ from $Y\to S$ which is a local isometry (in $Y$ and $S$ we
consider the Poincar\'e metric inherited from $\D$). In fact
$Y=\{e^{-R}<|z|<e^R\}$, where $R=\pi^2/\operatorname{Length}(\gamma)$,
and $\pi_\gamma$ maps the unit circle isometrically to $\gamma$. Moreover
$\pi_\gamma$ is an isometric injection of the outer part of the annulus
$\{1<|z|<e^{R}\}$ onto the funnel. These will be called the standard coordinates
of the funnel. See \cite{Diller01} and \cite{Buser92} for details.

The Poincare metric in the the funnel is explicit in the standard coordinates
and it is comparable to the hyperbolic metric on the disk in the coordinate
disk $|z|<e^R$ when restricted to $|z|>1$. 

We denote by $A_i$, $i=1,\ldots,n$ the funnels of $S$ bounded by $\gamma_i$ and
$\tilde\gamma_i$.

\begin{figure}\label{funnel}
\begin{center}
\includegraphics[width=10cm]{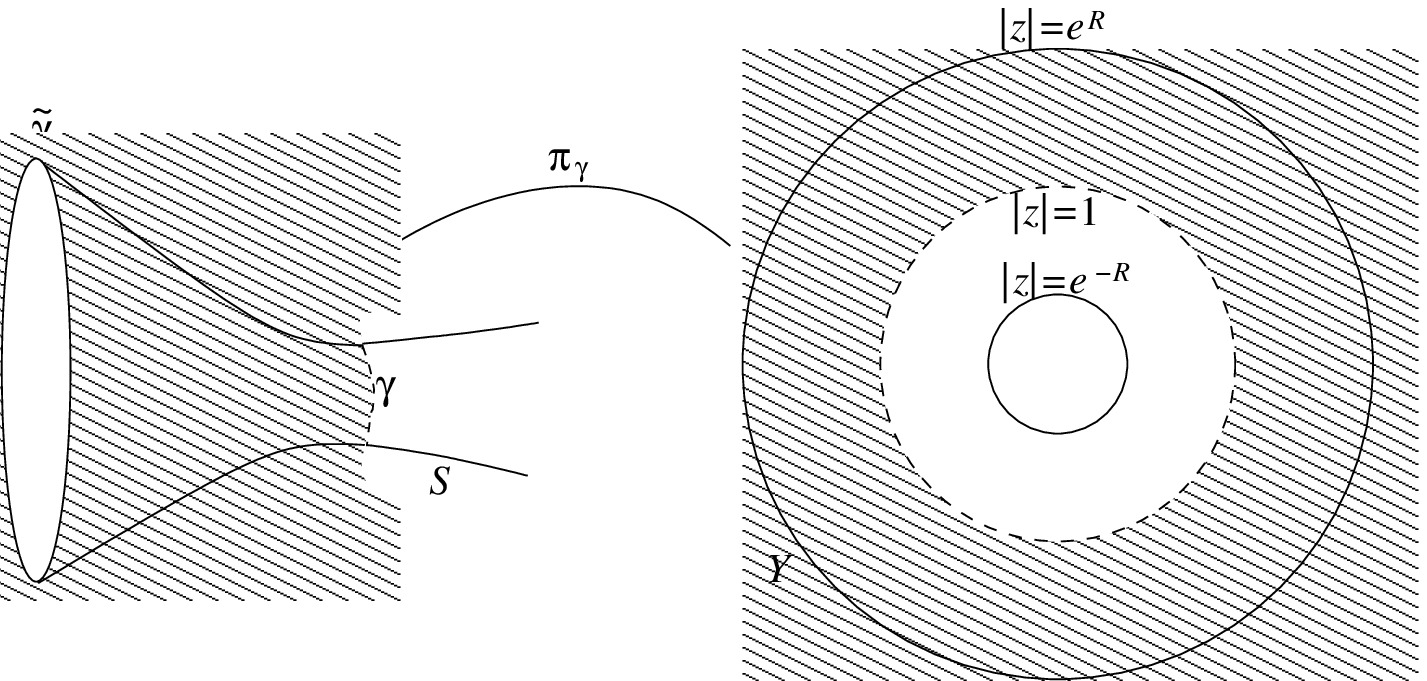}
\caption{Standard coordinates on the funnel}
\end{center}
\end{figure}

\subsection{The inhomogeneous Cauchy-Riemann equation on the surface} We want to
solve the inhomogeneous Cauchy-Riemann equation on $S$ with weighted uniform
estimates. In order to get good estimates it is useful to find functions $f\in
\H(S)$ with precise size control, i.e., $|f|\simeq e^\phi$ outside a
neighborhood of the zero set of $f$. With this function we can later modify an
integral formula to get a bounded solution to the $\dbar$-equation when the data
has compact support. The following Lemma provides such a function that in other
context has been termed a ``multiplier'':

\begin{lemma}~\label{multiplier}
Let $S$ be a finite Riemann surface and let $\phi$ be a subharmonic function
with bounded Laplacian. Then there is a function $f$ with hyperbolically
separated zero set $\Sigma$ such that $|f|\simeq  e^{\phi}$  whenever
$d(z,\Sigma)>\varepsilon$. Moreover if we fix any compact $K$ in $S$ it is
possible to find $f$ with the above properties and without zeros in $K$.
\end{lemma}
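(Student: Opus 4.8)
\noindent\emph{Proof plan.} The plan is to prescribe the divisor of $f$ by a hyperbolically separated set $\Sigma$ whose counting measure balances $\mu:=i\ddbar\phi$ at every scale, and then to recover $\log|f|$ as $\phi$ plus the Green potential of the resulting (strongly cancelling) discrepancy. Since $\phi$ has two-sided bounded Laplace--Beltrami operator, $\mu$ is comparable to the hyperbolic area measure, $\mu\simeq dA$. First I would fix a spacing $\rho>0$ large enough (depending only on the bounds for $\Delta\phi$) and a maximal $\rho$-separated net with Voronoi cells $\{Q_j\}$; each $Q_j$ then has bounded hyperbolic diameter and, by the lower bound on $\Delta\phi$, mass $m_j:=\mu(Q_j)\ge 2\pi$. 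In each cell I place $n_j:=\lfloor m_j/2\pi\rfloor\ge 1$ well-separated points, positioned so that the first moment $\int_{Q_j}(\zeta-c_j)\,d\tau(\zeta)$ vanishes, where $c_j$ is the centre of $Q_j$ and $\tau:=2\pi\sum_{\sigma\in\Sigma}\delta_\sigma-\mu$; this leaves on each cell only the mass defect $\tau(Q_j)=2\pi n_j-m_j\in(-2\pi,0]$. The union $\Sigma$ is hyperbolically separated.

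I would then set $v(w):=\int_S g(w,\zeta)\,d\tau(\zeta)$ and define $\log|f|:=\phi+v$, so that $i\ddbar\log|f|=\mu+\tau=2\pi\sum_\sigma\delta_\sigma$. Thus $\log|f|$ is harmonic off $\Sigma$ with a $\log$-pole at each $\sigma$; since the Green function $g\le 0$ vanishes on the ideal boundary, the desired estimate $|f|\simeq e^{\phi}$ for $\dist(w,\Sigma)>\varepsilon$ is equivalent to the uniform boundedness of $v$ away from $\Sigma$.

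That uniform bound, $\sup_{\dist(w,\Sigma)>\varepsilon}|v(w)|<\infty$, is the heart of the argument and the step I expect to be the main obstacle. A crude triangle inequality fails, since $\int_S|g(w,\zeta)|\,dA(\zeta)=\infty$ in the hyperbolic metric, so the balancing of $\tau$ must be used. I would localize: the cells within bounded distance $R$ of $w$ contribute a bounded amount, because the pole of $g$ is integrable against the bounded density $\mu$ and the finitely many point masses lie at distance $>\varepsilon$ from $w$; for the far cells I would exploit the cancellation of $\tau$ at every scale (vanishing first moment and bounded mass per cell, together with sign cancellation across cells) against the decay of $g$ and its derivatives, organising the sum into dyadic hyperbolic annuli about $w$. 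The cleanest way to carry this out is funnel by funnel: in the standard collar coordinates of each $A_i$ the Poincar\'e metric and the Green function are comparable to those of the disk, so the estimate reduces to the boundedness of the analogous weighted potential in $\D$, which is the model situation underlying the planar construction of \cite{MarMasOrt03} and the disk results of \cite{BerOrt95,OrtSei98}. On the compact core only finitely many cells intervene and the bound is immediate. Assembling these pieces yields the required uniform control of $v$.

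It remains to realise $\log|f|=\phi+v$ as $\log|f|$ for an honest $f\in\H(S)$ and to treat the compact set. The function $\phi+v$ is harmonic off its logarithmic poles, and to exponentiate it I must render its conjugate periods integral; this is exactly the correction used in Proposition~\ref{blashcke}, where subtracting a bounded combination $\sum_i c_i\log|h_i|$ over a homology basis adjusts every period to a multiple of $2\pi$ without affecting $|f|\simeq e^{\phi}$. Finally, to keep $f$ zero-free on a prescribed compact $K$, I note that $K$ meets only finitely many cells; removing those zeros and redistributing their (finite) mass to cells adjacent to $K$ changes $\tau$, and hence $v$, by a bounded amount in finitely many cells only, so all conclusions persist.
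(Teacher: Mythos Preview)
Your strategy is viable but takes a genuinely different route from the paper. The paper never builds the multiplier globally via potential theory. Instead it works \emph{locally and holomorphically}: in each funnel $A_i$ it transports $\phi$ to the coordinate disk $|z|<e^{R_i}$, extends it to a weight $\phi_i$ with bounded invariant Laplacian on the whole disk, and then simply quotes Seip's disk result \cite{Seip95b} to obtain $f_i\in\H(D_i)$ with $|f_i|\simeq e^{\phi_i}$ off a separated zero set (dividing by a finite Blaschke product clears any prescribed compact). The globalization is then a bounded Cousin~II problem---the data $f_i$ on the outer funnels against $f_0\equiv 1$ on the core---solved by reducing to a $\dbar$-equation with compactly supported data and invoking Lemma~\ref{dbaracotat}. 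Your approach, by contrast, prescribes the divisor $\Sigma$ directly on $S$, writes $\log|f|=\phi+v$ with $v$ the $S$-Green potential of the discrepancy $\tau$, and postpones the holomorphy issue to a period correction via Proposition~\ref{blashcke}. What the paper buys is modularity: the hard analytic estimate (boundedness of the balanced potential) is entirely encapsulated in the cited disk theorem, and the only surface-specific work is the soft patching. What your route buys is a self-contained construction, but the step you flag as ``the main obstacle'' is essentially a re-derivation of that same disk estimate, now further complicated by the fact that the Green function of $S$ in a funnel is only asymptotically, not identically, that of the coordinate disk. Two small caveats worth tightening in your write-up: the ``first moment'' $\int_{Q_j}(\zeta-c_j)\,d\tau$ has no intrinsic meaning on a surface and must be read in a fixed chart (funnel coordinates suffice); and the very definition of $v(w)=\int_S g(w,\zeta)\,d\tau(\zeta)$ is conditional on the cancellation you later exploit, so convergence and boundedness should be proved in one stroke rather than assumed at the outset.
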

\begin{proof}
In any of the funnels $A_i$ we transfer the subharmonic weight $\phi$ to the
standard coordinate chart $1<|z|<e^{R_i}$. We define a weight $\phi_i$ on the
disk $|z|<e^{R_i}$ in such a way that $\phi_i$ has bounded invariant Laplacian 
and moreover $|\phi-\phi_i|<C$ on the region $1<|z|<e^{R_i}$. One way to do so
is the following: we assume from the very beginning that $\phi$ is smooth (this
is no restriction since otherwise it can be approximated by a smooth function).
Define 
\begin{equation}\label{eq1}
\phi_i(z)= \phi(z) \chi(z) + M_i \|z\|^2,
\end{equation}
where $\chi$ is a cutoff function such that $\chi\equiv 1$ in
$e^{R_i/2}<|z|<e^{R_i}$,  $\chi\equiv 0$ in $|z|<1$ and $M_i$ is taken big
enough such that $\phi_i$ is subharmonic and the invariant Laplacian of $\phi_i$
is bounded above and below.

We are under the hypothesis of the result from \cite{Seip95b} that states that
there is an holomorphic function in the disk $f_i$ with separated zero set
$Z(f_i)$ (in the hyperbolic metric of the disk) such that $|f_i|\simeq
e^{\phi_i}$ whenever $d(z,Z(f_i))>\varepsilon$. Since the hyperbolic metric of
the disk is comparable to the hyperbolic metric in the funnel, we have found a
function $f_i\in \H(A_i)$ with separated zero set such that $|f_i(z)|\simeq
e^{\phi(z)}$ if $d(z,Z(f_i))>\varepsilon$. Moreover dividing out $f_i$ by a
finite Blaschke product we can assume that $f_i$ is zero free in any prefixed
compact of the disk. 

We consider the ``core'' of $S$ to be $S\setminus \tilde A_i$, where $\widetilde
{A_i}$ are the outer part of the funnels mapped by $e^{S_i}<|z|<e^{R_i}$. The
values of the $S_i$ are taken so big as to make sure that the compact $K$ in the
hypothesis of the Lemma is contained in the core of $S$. We adjust the $f_i$
$i=1,\ldots,n$ as mentioned before to make sure that they are zero free in the
inner part of the funnels $1<|z|<e^{S_i}$. We finally define $f_0\equiv 1$ in
the core of $S$. 

To patch the different $f_i$ together we will need to solve a Cousin II problem
with bounds. Our data is $f_i$ defined on the inner parts of the funnels mapped
by $1<|z|<e^{S_i}$. The data are bounded above and below in the inner parts of
the funnels (because $\phi$ is bounded above and below in any compact of $S$
and $f_i$ have no zeros there). We want to find functions $g_i\in H(A_i)$ and
$g_0$ holomorphic on the core of $S$ such that $f_i=g_0/g_i$ in the inner part
of the funnel. If moreover $g_i$ and $g_0$ are bounded (above and below) then
the function $f$ defined as $f_ig_i$ in each of the funnels $A_i$ and $g_0$ on
the core of $S$ is holomorphic on $S$ and has the desired growth properties. To
find the functions $g_i$ observe that since the intersection of the funnel $A_i$
with the core of $S$ strictly separates the outer part of the funnel from the
inner part of the core we can reduce the Cousin II problem to solving a
$\dbar$-equation with bounded estimates of the solution on $S$ when the data is
bounded and with compact support (the support is in the inner part of the
funnels). This can be achieved by Lemma~\ref{dbaracotat}.
\end{proof}

With this function we can then obtain the following  result which is interesting
by itself:
\begin{theorem}\label{dbar}
Let $S$ be a finite Riemann surface and let $\phi$ be a subharmonic function
with a bounded  Laplacian. There is  a constant $C>0$ such that for any
$(0,1)$-form $\omega$ on $S$ there is a solution $u$ to the inhomogeneous
Cauchy-Riemann equation $\dbar u=\omega$ in $S$ with the estimate
\[
\sup_{z\in S} |u(z)|e^{-\phi(z)}\le C \sup_{z\in S} \langle\omega(z)\rangle
e^{-\phi(z)},\] 
whenever the right hand is finite. 
\end{theorem}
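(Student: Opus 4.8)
The plan is to use the multiplier $f$ of Lemma~\ref{multiplier} to turn the unweighted, compactly supported solution operator $T$ of Lemma~\ref{dbaracotat} into a \emph{weighted} one, and to treat the non-compact ends by transplanting the known weighted estimate on the disk through the standard funnel coordinates. After normalizing so that $M:=\sup_S\langle\omega\rangle e^{-\phi}=1$, I would decompose the data with a partition of unity $1=\chi_c+\sum_{i=1}^n\chi_i$ subordinate to the splitting of $S$ into a relatively compact core and the funnels $A_i$, with $\chi_i$ supported in $A_i$, equal to $1$ on its outer part, and $\langle\dbar\chi_i\rangle\lesssim1$. Since every $(0,1)$-form on a Riemann surface is automatically $\dbar$-closed, there is no compatibility condition to check, so it suffices to solve $\dbar u=\chi_c\omega$ and each $\dbar u=\chi_i\omega$ separately with the bound $|u|\lesssim e^\phi$ and add the solutions.

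For the core piece $\omega_c:=\chi_c\omega$, supported in a fixed compact $K_0$, I would invoke Lemma~\ref{multiplier} to produce a multiplier $f$ with $|f|\simeq e^\phi$ off its hyperbolically separated zero set and, crucially, \emph{without zeros in $K_0$}; a maximum-modulus argument on the disks $D(\sigma,\varepsilon)$ around its zeros $\sigma$ then upgrades this to the global upper bound $|f|\lesssim e^\phi$ as well. The form $g:=\omega_c/f$ is smooth, supported in $K_0$, and satisfies $\langle g\rangle=\langle\omega_c\rangle/|f|\lesssim 1$ there. Setting $v:=T[g]$ from Lemma~\ref{dbaracotat} gives $\dbar v=g$ and $|v|\le C_{K_0}\langle g\rangle\lesssim C_{K_0}$, so that $u_c:=fv$ solves $\dbar u_c=f\,\dbar v=\omega_c$ and obeys $|u_c(z)|=|f(z)||v(z)|\lesssim e^{\phi(z)}$ for every $z\in S$. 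This is the whole point of the multiplier: it transfers the position-dependent unweighted bound of $T$ into a weighted bound valid uniformly on all of $S$.

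For a funnel piece $\omega_i:=\chi_i\omega$ I would pass to the standard coordinates, where $A_i$ is identified isometrically with the annulus $\{1<|z|<e^{R_i}\}$ inside the disk $\{|z|<e^{R_i}\}$ and the transplanted weight extends to a weight $\tilde\phi_i$ of bounded invariant Laplacian with $\tilde\phi_i\simeq\phi$ on the annulus, exactly as in~\eqref{eq1}. The transplant $\tilde\omega_i$ is supported away from the inner circle and satisfies $\langle\tilde\omega_i\rangle e^{-\tilde\phi_i}\lesssim 1$, so the model disk case (cf.~\cite{BerOrt95}) furnishes $\tilde u_i$ with $\dbar\tilde u_i=\tilde\omega_i$ and $|\tilde u_i|\lesssim e^{\tilde\phi_i}$. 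Cutting off by $\psi_i\equiv1$ on $\operatorname{supp}\omega_i$ and supported in $A_i$, the function $u_i:=\psi_i\tilde u_i$ extends by zero to a global one with $\dbar u_i=\omega_i+(\dbar\psi_i)\tilde u_i$ and $|u_i|\lesssim e^\phi$, where the error $(\dbar\psi_i)\tilde u_i$ is supported in a compact subset of the funnel and is $\lesssim e^\phi$ in weighted norm.

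Finally I would collect all these compactly supported errors into the core and solve $\dbar u_0=\chi_c\omega-\sum_i(\dbar\psi_i)\tilde u_i$ by the multiplier method of the second paragraph; then $u:=u_0+\sum_i\psi_i\tilde u_i$ satisfies $\dbar u=\omega$ on all of $S$ with $\sup_S|u|e^{-\phi}\lesssim 1$, and undoing the normalization gives the stated constant. The two points I expect to be the real obstacles are exactly these: obtaining a bound uniform up to the boundary, which forces the split into a compact core (handled by $T$) and ends modeled isometrically on a fixed disk (handled by the scale-correct disk estimate); and verifying that the multiplier supplies simultaneously the matching lower bound $|f|\gtrsim e^\phi$ on $K_0$ and the global upper bound $|f|\lesssim e^\phi$, since it is the combination of these that converts $T$ into a weighted solution operator.
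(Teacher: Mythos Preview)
Your argument is correct and matches the paper's proof almost step for step: solve the $\dbar$-equation in each funnel by transplanting to the model disk with the extended weight $\phi_i$, cut off, and then handle the remaining compactly supported data with the twisted operator $R[\cdot]=f\,T[\cdot/f]$ built from the multiplier $f$ of Lemma~\ref{multiplier} and the operator $T$ of Lemma~\ref{dbaracotat}. The only cosmetic differences are that the paper dispenses with the initial partition of unity (it simply restricts $\omega$ to each funnel, extends by zero across $|z|=1$, and defines the compact residual as $\omega_k=\omega-\sum_i\dbar\tilde u_i$), and that the disk estimate is cited from \cite{Ortega02} rather than \cite{BerOrt95}.
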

Recall that the notation $\langle\omega(z)\rangle$ means the hyperbolic norm of
$\omega$ at the point $z$.
\begin{proof}
Let  $w_i$ be the form $w$ restricted to the funnel $A_i$. We take a standard
coordinate chart and we may think of $w_i$ as a $(0,1)$-form defined on the disk
$|z|<e^{R_i}$ and with support in $1<|z|<e^{R_i}$. Consider as in the proof of
Lemma~\ref{multiplier} a subharmonic function $\phi_i$ in the disk with bounded
laplacian and such that $|\phi-\phi_i|<C$ if $1<|z|<e^{R_i}$.

By the results in \cite[Thm~2]{Ortega02} there is  a solution $u_i$ to the
problem $\dbar u_i =w_i$ in the disk $|z|<e^{R_i}$ with the  estimate
\[
\sup_{|z|<e^{R_i}} |u_i|e^{-\phi_i} \le C_i \sup_{1<|z|<e^{R_i}} 
\langle w_i\rangle e^{-\phi}
\] 
Observe that the hyperbolic metric of the disk and of the surface $S$ in the
funnel are equivalent. We consider $\tilde u_i = u_i \chi_i$, where  $\chi_i$ is
a cutoff function with support in $1<|z|<e^{R_i}$ and such that $\chi_i\equiv 1$
if $|z|>e^{R_i/2}$. The function $\tilde u_i$ is extended by $0$ to the
remaining of $S$ and it has the estimate $\sup_{S} |\tilde u_i|e^{-\phi}\le C_i
\sup_S \langle w\rangle e^{-\phi}$. Now $\dbar \tilde u_i$ coincides with $w$ on
the outer part of the funnel $A_i$. Thus the $(0,1)$-form $w_k=w-\sum_i \dbar
\tilde u_i$ has compact support in $S$ and it satisfies $\sup_S \langle
w_k\rangle e^{-\phi}\le \sup_S \langle w\rangle e^{-\phi}$. The desired solution
is then $u=\sum \tilde u_i +v$, where $v$ is such that $\dbar v = w_k$. We must
then solve $\dbar v = w_k$ with weighted uniform estimates but with the
advantage that $w_k$ has compact support $K$. 

Let $T(\omega_k)$ be a solution operator for $\partial\bar u =\omega_k$. We take
the operator $T$ given by Lemma~\ref{dbaracotat} the estimate $\sup_S
|T[w_k](z)|\le C_K \sup_K \langle w_k\rangle$ holds. Take $f$ with $|f|\simeq
e^{\phi}$ and without zeros in $K$ as given in Lemma~\ref{multiplier}. Then we
define $R$ as
\begin{equation}\label{pes}
R[\omega_k](z) = f(z) T[\omega_k/f](z),
\end{equation}
It solves $\bar\partial R[\omega_k] =\omega_k$ with the estimate
\[
\sup_S |R[\omega_k]| e^{-\phi} \le C_K \sup_K \langle\omega_k\rangle e^{-\phi}.
\]
The solution is thus $v=R[w_k]$.
\end{proof}

\section{The main results}

\begin{proposition}\label{interpolation}
A separated sequence $\Lambda\subset S$ is interpolating for $A_\phi(S)$ if and
only if the sequences $\Lambda_i=\Lambda \cap A_i$ are interpolating in
$A_\phi(A_i)$.
\end{proposition}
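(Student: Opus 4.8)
The plan is to prove the two implications separately; necessity is essentially free, while sufficiency carries all the content and will require a patching argument based on Theorem~\ref{dbar}. For necessity, I would use that restriction is a norm--decreasing map $A_\phi(S)\to A_\phi(A_i)$, since $\sup_{A_i}|f|e^{-\phi}\le\sup_S|f|e^{-\phi}$. Given data $\{v_\lambda\}_{\lambda\in\Lambda_i}$ on a single funnel with $\sup_{\Lambda_i}|v_\lambda|e^{-\phi(\lambda)}<\infty$, I would extend it by $0$ to the rest of $\Lambda$, interpolate it by some $f\in A_\phi(S)$ using that $\Lambda$ is interpolating, and then restrict $f$ to $A_i$. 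The restriction lies in $A_\phi(A_i)$, has comparable norm, and solves the interpolation problem on $\Lambda_i$, so each $\Lambda_i$ is interpolating.

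For the sufficiency direction I would first reduce the global problem to the funnels and a compact remainder. Since the core $S\setminus\bigcup_i\widetilde{A_i}$ is relatively compact and $\Lambda$ is hyperbolically separated, $\Lambda$ meets a slightly enlarged relatively compact core $\Omega_0$ in only finitely many points $\Lambda_0'$. On each funnel the hypothesis gives $\psi_i\in A_\phi(A_i)$ with $\psi_i(\lambda)=v_\lambda$ for $\lambda\in\Lambda_i$ and $\|\psi_i\|\lesssim\|\{v_\lambda\}\|$, and for the finitely many points of $\Lambda_0'$ I would build a global $\psi_0\in\H(S)$ with $\psi_0(\lambda)=v_\lambda$ there and $\|\psi_0\|\lesssim\|\{v_\lambda\}\|$, using Lagrange-type combinations of the functions $h_z$ of Proposition~\ref{blashcke} (their number is bounded by a constant depending only on $S$ and $\varepsilon$, and $\phi$ is bounded on $\Omega_0$). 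I would then take a partition of unity $\chi_0,\chi_1,\dots,\chi_n$ with $\chi_i$ supported in $A_i$ and $\equiv 1$ on the outer part, and $\chi_0$ supported in $\Omega_0$, and set $g=\sum_{i=0}^n\chi_i\psi_i$. At each $\lambda\in\Lambda$ every term with $\chi_i(\lambda)\ne 0$ satisfies $\psi_i(\lambda)=v_\lambda$ (because $\operatorname{supp}\chi_i\subset A_i$ for $i\ge 1$ and $\operatorname{supp}\chi_0\subset\Omega_0$), so $g(\lambda)=v_\lambda\sum_i\chi_i(\lambda)=v_\lambda$, while $\sup_S|g|e^{-\phi}\lesssim\|\{v_\lambda\}\|$. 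The form $\dbar g=\sum_i(\dbar\chi_i)\psi_i$ is smooth with compact support in the transition set $K$ where the cutoffs vary, and $K$ is relatively compact, so $\phi$ and all $\psi_i$ are bounded on it.

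The crucial observation I would extract at this point is that the coefficient of $\dbar g$ vanishes at every $\lambda\in\Lambda$: since $\sum_i\chi_i\equiv 1$ we have $\sum_i\dbar\chi_i\equiv 0$, and each active $\psi_i(\lambda)$ equals the common value $v_\lambda$, so $\dbar g(\lambda)=v_\lambda\sum_i\dbar\chi_i(\lambda)=0$. To finish I would write $f=g-u$ with $\dbar u=\dbar g$ and $u|_\Lambda=0$; then $f$ is holomorphic, interpolates the data, and is bounded in $A_\phi(S)$. To force the vanishing I would divide by a generating function $P\in A_\phi(S)$ whose zero set is exactly $\Lambda$ and whose zeros are simple there, solve $\dbar w=\dbar g/P$, and set $u=Pw$. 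Because $\dbar g$ vanishes on $\Lambda$ to first order while $P$ has simple zeros there, and $\phi\simeq 1$ on the compact $K$, the datum $\dbar g/P$ is bounded and compactly supported, so Lemma~\ref{dbaracotat} yields $\sup_S|w|\lesssim\|\{v_\lambda\}\|$. Then $\dbar u=P\,\dbar w=\dbar g$, $u=Pw$ vanishes wherever $P$ does, in particular on $\Lambda$, and $\sup_S|u|e^{-\phi}\le(\sup_S|P|e^{-\phi})\sup_S|w|\lesssim\|\{v_\lambda\}\|$.

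The main obstacle I anticipate is the construction of the global generating function $P$, i.e.\ showing that the hypotheses make $\Lambda$ a zero set of $A_\phi(S)$ with controlled growth. Each $\Lambda_i$, being interpolating in $A_\phi(A_i)$, is in particular the zero set of a function in $A_\phi(A_i)$ (a canonical product on the funnel, via the standard coordinate transfer to the disk), and I would patch these together, incorporating the finitely many core points, by the same Cousin~II--with--bounds technique used in the proof of Lemma~\ref{multiplier}. The other delicate point, which is what makes the division legitimate, is precisely the vanishing of $\dbar g$ on $\Lambda$ noted above: without it the quotient $\dbar g/P$ would be singular at the points of $\Lambda\cap K$, which in general cannot be avoided since the transition bands in the funnels must separate the inner from the outer part and may contain points of $\Lambda$.
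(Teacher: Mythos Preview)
Your approach is correct but genuinely different from the paper's. The paper never constructs a global generating function $P$ and does not use your division trick. Instead it treats each funnel separately by an iterative (Neumann series) correction: take the local interpolant $f$, multiply by a cutoff $\chi_\varepsilon$ with $\langle\dbar\chi_\varepsilon\rangle<\varepsilon/(KC)$, solve $\dbar u=f\,\dbar\chi_\varepsilon$ via Theorem~\ref{dbar}, obtain a global holomorphic function with interpolation error of size $\varepsilon$, and sum a geometric series to get an exact global interpolant for $\Lambda_i$ minus a finite set. The finitely many remaining points (from the core and the removed sets) are then appended one at a time using the Blaschke factors of Proposition~\ref{blashcke}. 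Your route glues all the local interpolants at once via a partition of unity and forces the $\dbar$-correction to vanish on $\Lambda$ by dividing out $P$; the key observation that $\dbar g$ vanishes on $\Lambda$ is what makes this legitimate. What you gain is a clean single-step structure with no iteration; what the paper gains is not having to build $P$, which is an extra (though doable) Cousin~II construction on top of everything else.

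One technical point you should tighten: the quotient $\dbar g/P$ is bounded and compactly supported, but it is \emph{not} smooth at the points of $\Lambda\cap K$ in general (think $(\bar z-\bar\lambda)/(z-\lambda)$, which is bounded but discontinuous at $\lambda$). Lemma~\ref{dbaracotat} as stated asks for smooth data, so you should either invoke a $\dbar$-solving operator that accepts $L^\infty$ data with compact support, or observe that the standard Cauchy integral operator on a bounded chart already does this. This does not affect the substance of your argument.
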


\begin{proof}
We only need to prove that we can pass from the local to the global
interpolation property. We split the proof in two steps
\begin{enumerate} 
\item From a funnel $A_i$ to global $S$: We need to prove that there are finite
sets $F_i\subset \Lambda_i$ such that $\cup_{i=1}^n (\Lambda_i\setminus F_i)$ is
interpolating globally.

\item Filling up the remainder. We shall prove that by adding a finite number of
points to an interpolating sequence we still get an interpolating sequence. Thus
$\Lambda$ is interpolating if $(\Lambda_1\setminus
F_1)\cup\cdots\cup(\Lambda_n\setminus F_n)$ is interpolating.
\end{enumerate}
\end{proof}

Let $\tilde \gamma$ be one of the closed curves on the boundary. Take a funnel
$A$  with outer end curve in $\tilde\gamma$ and inner end curve in $\gamma$. The
constant of interpolation in the funnel $A$ is $K>0$. Take a cutoff function
$\chi_\varepsilon$ with support in the funnel such that
$\langle\dbar\chi_\varepsilon\rangle <\varepsilon/(KC)$ (where $C$ is the
constant in Theorem~\ref{dbar}), the support is in a thick annulus of hyperbolic
thickness $M=M(\varepsilon,K,C)$. We consider a smaller funnel where
$\chi_\varepsilon\equiv 1$. The sequence $\Lambda$ in this smaller funnel has
still at most interpolation constant $K$. We can interpolate arbitrary
values on $\Lambda$ being small near the inner curve $\gamma$ of $A$ in the
following way. Take some values $v_\lambda$ with norm one. Take a function in
the funnel $f$ with norm at most $K$ that solves the interpolation problem. We
are going to approximate it by a function in $A$ that is small near $\gamma$.
Cut it off by $\chi_\varepsilon$ and correct via the following
inhomogeneous Cauchy-Riemann equation:

\[
\dbar u = f \dbar \chi
\]
The function $h=u-f\chi$ is holomorphic. By using Theorem~\ref{dbar} on
it is possible to solve the equation with a solution $u$ such
that $\sup|u|e^{-\phi}\le \varepsilon$. The function $h$ does not solve the
problem
directly but it almost does. We reiterate the procedure (interpolating the
error $v_\lambda-h(\lambda)$ and with a convergent series we get finally a
function $g$ such that $h(\lambda)=v_\lambda$, $\sup_A |h|e^{-\phi}\le
2$ and moreover in the inner half of the funnel that we denote by
$\tilde A$, $\sup_{\tilde A}|h|e^{-\phi}\le \varepsilon$. 

Now it is easier to make it global. Take a new cutoff function $\chi$ with
support in the funnel $A$ and that is one on the outer part of (i.e.
$A\setminus \tilde A$. Then we need to solve 
\[
\dbar u = h\dbar \chi,
\]
with good global estimates in $S$. These are  given by Theorem~\ref{dbar}. We
have solved the interpolation problem when the sequence lies in the funnels. For
the general situation we only need to add a finite number of points. The
existence of ``Blaschke''-type factors $h_\lambda(z)$ provided by
Theorem~\ref{blashcke} shows that $\Lambda\cup\lambda$ is interpolating if
$\Lambda$ is interpolating (it is immediate to build functions in the space such
that $f|_\Lambda\equiv 0$ and $f(\lambda)\ne 0$). \qed

For the sampling part we need the following definition
\begin{definition}\label{associats} Given the pair $(S,\phi)$ of a finite
Riemann surface and a subharmonic function defined on it, we associate to it the
pairs: $(D_i,\phi_i)_{i=1,\ldots n}$ of disks $D_i$ and subharmonic functions
$\phi_i$ defined on the disks as follow: If $A_i=\{1<|z|<e^{R^i}\}$,
$i=1,\ldots,n$ are the standard charts of the funnels of $S$ we define
$D_i=\{|z|<e^{R_i}\}$ and $\phi_i$ is any subharmonic function in $D_i$ 
such that $|\phi_i-\phi|<C$ in the region $1<|z|<e^{R_i}$, $\Delta \phi_i=\Delta
\phi$ in $e^{R_i/2}<|z|<e^{R_i}$ and $\Delta\phi_i \simeq 1$ in $|z|<e^{R_i/2}$.
They can be defined similarly as in \eqref{eq1}, but to make sure $\Delta
\phi_i=\Delta \phi$ we may take instead \[
\phi_i(z)=\phi(z)\chi(z)+M_i\psi(z),
\]
where $\psi$ is any bounded subharmonic function in $D_i$ 
such that $\Delta\psi(z)=1$ if $|z|<e^{R_i/2}$ and $0$ elsewhere.
\end{definition}
The funnels $A_i$ can be considered funnels of $S$ and they are subdomains of
$D_i$ too. We will exploit this double nature in the following theorem
\begin{theorem}\label{sampling}
Let $S$ be a finite Riemann surface and let $\phi$ be a subharmonic function
with bounded  Laplacian. A separated sequence $\Lambda$ is sampling for
$A_\phi(S)$ if and only if all the sequences in the funnels
$\Lambda_i=\Lambda_i\cap A_i\subset D_i$ are sampling sequences for
$A_{\phi_i}(D_i)$, where $(D_i,\phi_i)$ are the associated pairs to $S$ given by
Definition~\ref{associats}. 
\end{theorem}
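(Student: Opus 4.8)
The plan is to mirror the structure of Proposition~\ref{interpolation}: dispose of the compact ``core'' of $S$ by the maximum principle, and reduce the remaining estimate on each funnel $A_i$ to the sampling inequality on the disk $D_i$, transferring functions back and forth by a cut-off together with a solution of a $\dbar$-equation carrying uniform weighted bounds. The two workhorses are Theorem~\ref{dbar} (to correct cut-offs on $S$) and its disk counterpart from \cite{Ortega02} (to correct cut-offs on $D_i$), while the comparison $|\phi-\phi_i|<C$ on $A_i$ from Definition~\ref{associats} lets us pass freely between the weights. A preliminary remark, used throughout, is that the artificial centre $\{|z|<1\}$ of each $D_i$ causes no trouble: since $\log|F|$ is subharmonic and $\Delta\phi_i\simeq 1$ on $\{|z|<e^{R_i/2}\}$, the maximum principle gives $\sup_{|z|\le e^{R_i/2}}|F|e^{-\phi_i}\lesssim \sup_{|z|=e^{R_i/2}}|F|e^{-\phi_i}$, so $\sup_{D_i}|F|e^{-\phi_i}\simeq \sup_{A_i}|F|e^{-\phi_i}$; symmetrically, writing $\mathcal K=S\setminus\bigcup_i\{|z|>e^{R_i/2}\}$ for the compact core, the same principle yields $\sup_{S}|f|e^{-\phi}\simeq \max_i\sup_{\{e^{R_i/2}\le|z|<e^{R_i}\}}|f|e^{-\phi}$. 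Thus both statements concern only the outer parts of the funnels.

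For necessity (global sampling $\Rightarrow$ local sampling) I would argue by contraposition. If some $\Lambda_{i_0}$ fails to be sampling for $A_{\phi_{i_0}}(D_{i_0})$, take normalized witnesses $F_k$ with $\sup_{D_{i_0}}|F_k|e^{-\phi_{i_0}}=1$ and $\sup_{\Lambda_{i_0}}|F_k|e^{-\phi_{i_0}}\to 0$. Because the density obstruction for the disk is a phenomenon at $\partial D_{i_0}=\{|z|=e^{R_{i_0}}\}$ --- which is exactly the ideal boundary of $S$ at that end --- the $F_k$ may be taken to concentrate near $|z|=e^{R_{i_0}}$, hence to be as small as we wish on any fixed compact subset of the funnel. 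Cutting $F_k$ off near the inner geodesic $\gamma_{i_0}$ and correcting by Theorem~\ref{dbar} produces $\hat F_k\in A_\phi(S)$ agreeing with $F_k$ near the ideal boundary; the cut-off error is supported in a fixed collar where $F_k\to 0$, so $\sup_S|\hat F_k|e^{-\phi}\gtrsim 1$ while $\sup_\Lambda|\hat F_k|e^{-\phi}\to 0$, contradicting that $\Lambda$ samples $S$.

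For sufficiency (local sampling $\Rightarrow$ global sampling) the scheme is the same run in reverse: given $f\in A_\phi(S)$, on each funnel form $\hat f_i=\chi_i f-u_i$ with $\chi_i\equiv1$ on the outer part, $\chi_i\equiv0$ near $\gamma_i$, and $\dbar u_i=f\,\dbar\chi_i$ solved on $D_i$ by the disk estimate; then $\hat f_i\in A_{\phi_i}(D_i)$ agrees with $f$ where the supremum lives, and the disk sampling inequality together with $|\phi-\phi_i|<C$ returns $\sup_{\{|z|\ge e^{R_i/2}\}}|f|e^{-\phi}\lesssim \sup_{\Lambda_i}|f|e^{-\phi}+E_i$, with $E_i\lesssim\langle\dbar\chi_i\rangle\,\sup_{\operatorname{supp}\dbar\chi_i}|f|e^{-\phi}$. \textbf{The main obstacle is precisely this correction term $E_i$.} The transition annulus of $\chi_i$ sits in a fixed compact collar near $\gamma_i$ (the funnel has only bounded hyperbolic width towards the core), so $\langle\dbar\chi_i\rangle$ cannot be made small there, and that collar meets the separated set $\Lambda_i$ in only finitely many points; hence $E_i$ is a priori only of the same order as $\sup_S|f|e^{-\phi}$, rather than an absorbable remainder, and the estimate does not close naively. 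I expect to defeat it exactly as in the argument following Proposition~\ref{interpolation}: spread the cut-off over a thick collar so that $\langle\dbar\chi_i\rangle$ is as small as one pleases, accept that $\hat f_i$ then only captures $f$ very near the ideal boundary, and recover the intermediate annuli by iterating the construction inward and summing a convergent geometric series, the maximum principle guaranteeing at each stage that what remains is controlled by the circles already treated. Assembling the funnels and adjoining the finitely many core points of $\Lambda$ (which only help, by Proposition~\ref{blashcke}) then yields the global sampling inequality.
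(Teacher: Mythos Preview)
Your approach is genuinely different from the paper's, and the difference matters because your route runs into a real obstruction that the paper's route avoids. The paper never attempts a direct cut-and-$\dbar$ transfer for sampling; instead it proves a Beurling-type characterisation (Theorem~\ref{beurling}): a separated $\Lambda$ is sampling for $A_\phi(S)$ iff $\Lambda$ is a uniqueness set and every weak-limit triplet $(\D,\phi^*,\Lambda^*)$ obtained by zooming toward the ideal boundary has $\Lambda^*$ a uniqueness set for $A_{\phi^*}(\D)$. It then observes (Lemma~\ref{disc}) that the weak limits for $(S,\phi,\Lambda)$ coincide with the union of those for the $(D_i,\phi_i,\Lambda_i)$, because the hyperbolic metrics of $S$ and $D_i$ agree asymptotically on each funnel, while Lemma~\ref{uniqueness} equates uniqueness on $S$ with uniqueness on each $D_i$ via a bounded Cousin~II problem. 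This is qualitative and sidesteps any absorption of error terms.

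Your sufficiency argument, by contrast, does not close at precisely the point you flag. After applying the disk sampling constant $C_i$, the error is of order $(C_i{+}1)\,\langle\dbar\chi_i\rangle\,\sup_S|f|e^{-\phi}$, and you need this strictly smaller than $\sup_S|f|e^{-\phi}$ to absorb; but $C_i$ is fixed and not at your disposal, while $\langle\dbar\chi_i\rangle$ is bounded below by the reciprocal of the hyperbolic width of the transition collar. Pushing the collar outward to make it thick forces $\chi_i\equiv 1$ only on a thin outer shell, and then your preliminary maximum-principle remark no longer reduces $\sup_S$ to that shell: the constant in that remark is essentially $\exp\bigl(2\sup_{\text{core}}|\phi|\bigr)$, which blows up exponentially in the hyperbolic radius of the enlarged core (since $\Delta\phi\simeq 1$). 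And the ``iteration inward with the maximum principle guaranteeing that what remains is controlled by the circles already treated'' does not exist: $\log|f|-\phi$ is neither sub- nor superharmonic, and the honest maximum principle for $|f|$ on an annulus uses \emph{both} boundary circles, so control cannot be propagated inward from the outer circle alone. There is a parallel, smaller gap in your necessity argument: the claim that the bad $F_k$ ``may be taken to concentrate near $|z|=e^{R_{i_0}}$'' is unjustified; if they do not, a normal-families limit yields a single nonzero $F\in A_{\phi_{i_0}}(D_{i_0})$ vanishing on $\Lambda_{i_0}$, and transporting that to a nonzero function in $A_\phi(S)$ vanishing on $\Lambda$ is exactly the content of Lemma~\ref{uniqueness} (a Cousin~II argument), not a cutoff-plus-$\dbar$.
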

Thus this Theorem and Proposition~\ref{interpolation}  show that the properties
of sampling and interpolation only depend on the behavior of the sequence and
the weight near the boundary pieces. 

To prove Theorem~\ref{sampling} we need some previous results
\begin{lemma}\label{uniqueness}
Let $S$ be a finite Riemann surface and let $\phi$ be a subharmonic function
with bounded  Laplacian. A  sequence $\Lambda\subset S$ is a uniqueness sequence
for $A_\phi(S)$ if and only if all the sequences in the funnels
$\Lambda_i=\Lambda_i\cap A_i\subset D_i$ are uniqueness sequences for
$A_{\phi_i}(D_i)$, where $(D_i,\phi_i)$ are the associated pairs to $S$ given by
Definition~\ref{associats}. 
\end{lemma}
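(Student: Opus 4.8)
The plan is to mirror the proof of Proposition~\ref{interpolation}: localize the global uniqueness problem to the funnels, and transfer each funnel problem to its associated disk $D_i$ by cutting off near the inner geodesic $\gamma_i$ and correcting with the weighted $\dbar$-solution of Theorem~\ref{dbar}. Throughout I use that on the funnel $A_i$ one has $|\phi-\phi_i|<C$, so that the class $A_\phi(A_i)$ and the restriction of $A_{\phi_i}(D_i)$ to the annulus carry comparable norms.

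I would first dispose of the core $S\setminus\bigcup_i\tilde A_i$, a compact subset of $S$. If $\Lambda$ has infinitely many points there they accumulate at an interior point and $\Lambda$ is a uniqueness sequence by the identity theorem; otherwise $\Lambda$ meets the core in finitely many points, and by Proposition~\ref{blashcke} multiplying or dividing by finitely many factors $h_\lambda$ changes neither membership in $A_\phi(S)$ (up to norm equivalence) nor the property of being a uniqueness sequence. Hence I may assume $\Lambda=\bigcup_i\Lambda_i$ with each $\Lambda_i\subset A_i$, and the statement reduces to a funnel-by-funnel equivalence between uniqueness for $A_\phi(A_i)$ and uniqueness for $A_{\phi_i}(D_i)$, together with the elementary fact that a nonzero $f\in\H(S)$ cannot vanish identically on any funnel, so global uniqueness holds as soon as some $\Lambda_i$ is a funnel-uniqueness sequence.

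For the transfer in each funnel I would argue both inclusions by contraposition. Given a nonzero $g\in A_{\phi_i}(D_i)$ with $g|_{\Lambda_i}=0$, restrict it to the annulus $A_i\subset D_i$, cut it off by a function $\chi$ that is $1$ on the outer part of $A_i$ and $0$ near $\gamma_i$, and set $f=\chi g-u$ with $\dbar u=g\,\dbar\chi$; choosing the transition region hyperbolically thick makes $\langle\dbar\chi\rangle$ small, so Theorem~\ref{dbar} yields $u$ with $\sup_S|u|e^{-\phi}$ much smaller than $\|g\|$, and $f\in A_\phi(S)$ is a nonzero holomorphic function. The reverse passage, from a nonzero $f\in A_\phi(A_i)$ vanishing on $\Lambda_i$ to a disk function, is identical after extending $f$ from the annulus to $D_i$ by the same cut-off near $|z|=1$ and a $\dbar$-correction solved by the disk estimate used in the proof of Lemma~\ref{multiplier}.

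The main obstacle is that the $\dbar$-correction $u$ does not vanish on $\Lambda_i$, so $f=\chi g-u$ need not inherit the zeros of $g$ on the outer part, where $\chi\equiv1$; since $\Lambda_i$ accumulates only at the outer boundary, this spoils infinitely many of the required zeros and cannot be repaired by finitely many factors $h_\lambda$. I would resolve this by factoring out the zeros before correcting: write $g=B\,G$ on $D_i$, where $B$ is a Seip-type factor with zero set exactly $\Lambda_i$ and $|B|\simeq e^{\psi}$ off $\Lambda_i$ for a controlled subharmonic $\psi$ (the construction underlying Lemma~\ref{multiplier}), so that $G$ is zero-free along $\Lambda_i$ and bounded in the weight $\phi_i-\psi$. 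The transfer is then applied to the \emph{zero-free} quotient $G$, whose zeros need not be preserved, and the prescribed zeros are reinstated by multiplying the transferred function by the corresponding global factor; the small residual created by the cut-off is absorbed by the convergent iteration already used in Proposition~\ref{interpolation}, while the finitely many points of $\Lambda_i$ in the compact transition annulus are restored by the Blaschke factors of Proposition~\ref{blashcke}. Verifying that these factorizations stay within the correct weighted classes on both the annulus and the disk is the one genuinely delicate point, and is where the comparability $|\phi-\phi_i|<C$ and the equality $\Delta\phi_i=\Delta\phi$ near the outer boundary (Definition~\ref{associats}) are essential.
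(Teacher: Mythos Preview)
Your diagnosis of the obstacle is correct: an additive cut-off plus a $\dbar$-correction produces a holomorphic function $f=\chi g-u$ that will \emph{not} vanish on $\Lambda_i$, and since $\Lambda_i$ accumulates only at the outer boundary this ruins infinitely many of the required zeros. The proposed repair, however, does not work. The ``Seip-type factor'' behind Lemma~\ref{multiplier} is a function whose (separated) zero set is \emph{produced by} the weight---you do not get to choose it. There is no mechanism in that construction, or anywhere else in the paper, that lets you write $g=B\,G$ with $Z(B)=\Lambda_i$ and $|B|\simeq e^{\psi}$ for a subharmonic $\psi$ such that $\phi_i-\psi$ still has Laplacian bounded above and below (which is what you would need to run Theorem~\ref{dbar} on $G$). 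Note too that the lemma does not even assume $\Lambda$ is separated, so $\Lambda_i$ can be an arbitrary subset of $Z(g)$; a multiplier with exactly those zeros and two-sided growth control simply need not exist. The ``convergent iteration'' from Proposition~\ref{interpolation} does not help either: that scheme corrects \emph{values} at prescribed points, an additive problem, whereas forcing prescribed \emph{zeros} is multiplicative and does not iterate in the same way.

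The paper avoids all of this by working multiplicatively from the start. After dividing $g$ by finitely many Blaschke factors so that it is zero-free on the inner half $1<|z|\le e^{R_i/2}$ of the funnel, one covers $D_i$ by $\{|z|>1\}$ and $\{|z|<e^{R_i/2}\}$, takes the cocycle $g/1=g$ on the overlap (bounded above and below there), and solves a bounded second Cousin problem: $g=c_0/c_1$ with $c_0,c_1$ non-vanishing and bounded above and below on their respective sets. Then $h:=g/c_0$ on $\{|z|>1\}$ glues to $1/c_1$ on $\{|z|<e^{R_i/2}\}$, giving a single $h\in A_{\phi_i}(D_i)$ with $Z(g)\subset Z(h)$; the finitely many removed points are put back with the factors of Proposition~\ref{blashcke}. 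The reverse direction is identical. In short: replace your additive $\dbar$-gluing by a multiplicative Cousin~II gluing, and the zero-preservation issue disappears.
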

\begin{proof}
It is easier to deal by negation. Let $\Lambda$ be contained in the zero set of
a function $f\in A_\phi(S)$. Therefore $\Lambda_i$ is in the zero set of $f\in
A_\phi(A_i)$. We divide by  a finite number of zeros $E_i$ and we obtain a new
function $g\in A_\phi(A_i)$ without zeros in $1<|z|\le e^{R_i/2}$ and such that
$\Lambda_i\setminus E_i\subset Z(g)$. Take the disk $D_i$ and consider the cover
by two open sets $|z|>1$ and $|z|<e^{R_i/2}$.  On the first set we have the
function $g$ and on the second the function $1$. The quotient is bounded above
and below in the intersection of the sets. This defines a bounded Cousin II in
the disk $D_i$
problem that can be solved with bounded data. We get a new function $h\in
A_{\phi_i}(D_i)$ that vanishes in $Z(g)$. We can now add the finite number of
zeros $E_i$ without harm. The reciprocal implication follows with the same
argument.
\end{proof}
The next result is inspired by a result of Beurling (\cite[pp.
351--365]{Beurling89b}) that relates the property of sampling sequence to that
of uniqueness for all weak limits of the sequence. In the context of the
Bernstein space (in the original work by Beurling) the space was fixed (it was
$\C$, the space of functions was fixed, the Bernstein class, and he considered
translates and limits of it of the sampling sequence). Here we need to move and
take limits of the sequence (by zooming on appropriate portions of it) but we
also need to change the support space (portions of $S$ near the funnel that look
like the unit disk) and we will also move the space of functions by changing the
weights. We need some definitions: 
\begin{definition}
We consider triplets $(D_n,\phi_n,\Lambda_n)$ where $D_n$ are disks
$D_n=D(0,r_n)\subset \D$, $\phi_n$ are subharmonic functions defined in a 
neighborhood of $D_n$  and $\Lambda_n$ is a finite collection of
points in $D_n$. We say that $(D_n,\phi_n,\Lambda_n)$ converges
 weakly to $(\D,\phi,\Lambda)$ (where $\D$ is the unit disk, $\phi$ a
subharmonic function in $\D$ and $\Lambda$ a discrete sequence in $\D$) if the
following conditions are fullfilled:
\begin{itemize}
\item The domains $D_n$ tend to $\D$, i.e.: $r_n\to 1$,
\item The weights $\phi_n$ tend to the weight $\phi$ in the sense
that $\Delta\phi_n$ as measures converges weakly to $\Delta\phi$.
\item The sequences $\Lambda_n$ converge weakly to $\Lambda$, i.e,
the measure $\sum_{\lambda\in\Lambda_n} \delta_{\lambda}$
converges weakly to the measure 
$\sum_{\lambda\in\Lambda} \delta_{\lambda}$.
\end{itemize}
\end{definition}
Let us fix a point $p\in S$. If a sequence of points $z_n\in S$ goes to
$\infty$, i.e. $d(z_n, p)\to\infty$, from a point $n_0$ on it will eventually
belong to the union of the funnels $A_1\cup \cdots \cup A_n$. If we take the set
of points $D_n=\{z\in S;\ d(z,z_n)<d(z_n,p)/2\}$ then $D_n$ is an hyperbolic
disk contained in the funnels if $n$ is big enough. In each of the $D_n$ we
consider the function $\phi_n=\phi|_{D_n}$ and $\Lambda_n=\Lambda\cap D_n$. Thus
for any sequence of points $z_n$ with $d(p,z_n)\to\infty$ we build a triplet
$(D_n,\phi_n,\Lambda_n)$ for $n$ big enough. 
\begin{definition}
Let $W(S,\phi,\Lambda)$ be the set of all triplets $(\D,\phi^*,\Lambda^*)$ which
are
weak limits of triplets $(D_n,\phi_n,\Lambda_n)$ associated to any sequence
$z_n$ such that $d(p,z_n)\to\infty$.
\end{definition}
The theorem of Beurling on our context is 
\begin{theorem}\label{beurling}
Let $S$ be Riemann surface of finite type and let $\phi$ be  a
subharmonic function with bounded  Laplacian. A separated sequence
$\Lambda$ is sampling for $A_\phi(S)$ if and only if 
\begin{itemize}
\item The sequence $\Lambda$ is a uniqueness set for $A_\phi(S)$
\item For any triplet
$(\D,\phi^*,\Lambda^*)\in W(S,\phi,\Lambda)$, the sequence $\Lambda^*$ is a
uniqueness
set for $A_{\phi^*}(\D)$.
\end{itemize}
\end{theorem}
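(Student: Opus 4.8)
The plan is to prove Theorem~\ref{beurling} by establishing the two implications separately, modeling the argument on Beurling's classical compactness-and-normal-families technique adapted to the moving-space setting introduced above.

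\medskip

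\emph{Necessity.} First I would assume $\Lambda$ is sampling for $A_\phi(S)$ and deduce the two uniqueness conditions. That $\Lambda$ is a uniqueness set for $A_\phi(S)$ is immediate: the sampling inequality $\sup_S|f|e^{-\phi}\le C\sup_\Lambda|f|e^{-\phi}$ forces any $f$ vanishing on $\Lambda$ to vanish identically. For the second condition, I would argue by contraposition. Suppose some limit triplet $(\D,\phi^*,\Lambda^*)\in W(S,\phi,\Lambda)$ admits a nonzero $g\in A_{\phi^*}(\D)$ vanishing on $\Lambda^*$. Fixing $z_n\to\infty$ realizing this limit, I would transplant $g$ back to the disks $D_n$ (using the standard funnel coordinates, so that $\phi_n\to\phi^*$ weakly and $\Lambda_n\to\Lambda^*$), producing functions that are nearly holomorphic on $D_n$, small on $\Lambda_n$, yet of unit size at the center $z_n$. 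Correcting the small $\dbar$-error by Theorem~\ref{dbar} and cutting off to return to $S$, I would obtain a sequence $f_n\in A_\phi(S)$ with $\sup_{\Lambda}|f_n|e^{-\phi}\to 0$ but $\sup_S|f_n|e^{-\phi}\gtrsim 1$, contradicting the sampling inequality.

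\medskip

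\emph{Sufficiency.} Conversely, assume both uniqueness conditions and suppose, for contradiction, that $\Lambda$ is \emph{not} sampling. Then there exist $f_n\in A_\phi(S)$ with $\sup_S|f_n|e^{-\phi}=1$ but $\sup_\Lambda|f_n|e^{-\phi}\to 0$. Choose points $z_n\in S$ where $|f_n|e^{-\phi}$ nearly attains its supremum. I would split into two cases according to whether $\{z_n\}$ stays in a compact core or escapes to infinity. If (a subsequence of) $z_n$ stays in a compact $K\subset S$, then by a normal-families argument the $f_n e^{-\phi}$-normalized functions converge to a nonzero $f\in A_\phi(S)$ vanishing on $\Lambda$, contradicting the uniqueness of $\Lambda$ for $A_\phi(S)$. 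If instead $d(p,z_n)\to\infty$, I would pass to the associated triplets $(D_n,\phi_n,\Lambda_n)$, extract a weakly convergent subsequence with limit $(\D,\phi^*,\Lambda^*)\in W(S,\phi,\Lambda)$, and show the renormalized restrictions of $f_n$ converge to a nonzero $g\in A_{\phi^*}(\D)$ vanishing on $\Lambda^*$, contradicting the second uniqueness hypothesis.

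\medskip

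The main obstacle will be the sufficiency direction, specifically the passage to the limit in the escaping case: I must verify that the rescaled functions $f_n$, after transplanting to the fixed disk $\D$ via the funnel coordinates and dividing by a comparison multiplier $|f|\simeq e^{\phi_n}$ of the type furnished by Lemma~\ref{multiplier}, form a normal family whose limit is genuinely holomorphic (not merely a $\dbar$-approximate solution) and genuinely nonzero. Controlling this requires uniform estimates ensuring that the weights $\phi_n$ do not degenerate — precisely the role of the bounded-Laplacian hypothesis, which keeps $\Delta\phi_n$ tight so that the weak limit $\phi^*$ is again a legitimate subharmonic weight with bounded Laplacian — together with the interior regularity afforded by Theorem~\ref{dbar} to absorb the cutoff errors introduced when localizing $f_n$ to $D_n$. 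Establishing that nonvanishing of the limit at the center survives the normalization, and that the zero condition on $\Lambda^*$ is inherited from the weak convergence $\Lambda_n\to\Lambda^*$, is the technical heart of the argument.
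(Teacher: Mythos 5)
Your proposal follows essentially the same route as the paper: both implications are Beurling-style compactness arguments, with the bounded/escaping dichotomy for the near-extremal points $z_n$, extraction of a weak limit triplet $(\D,\phi^*,\Lambda^*)$, and a cutoff-plus-Theorem~\ref{dbar} correction to transplant a function vanishing on $\Lambda^*$ back to $S$ in the necessity direction. The one substantive divergence is the normalization in the escaping case: the paper multiplies $f_n$ by a zero-free holomorphic factor $e^{g_n}$ (absorbing the harmonic part of $\phi_n$ so that $\phi_n+\Re(g_n)$ converges locally uniformly), rather than dividing by a Lemma~\ref{multiplier} multiplier as you suggest, which would introduce poles along its separated zero set and so cannot produce the holomorphic limit directly.
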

\begin{proof} 
Let us prove that the uniqueness conditions imply that $\Lambda$ is a sampling
sequence. If it were not, there would be a sequence of functions $f_n\in
A_\phi(S)$ such that $\sup_{\Lambda}|f_n|e^{-\phi}\le 1/n$ and
$\sup_{S}|f_n|e^{-\phi}=1$. Take a sequence of points $z_n$ with
$|f_n(z_n)|e^{-\phi}\ge 1/2$. If $z_n$ are bounded we can take a subsequence of
points that we still denote $z_n$ convergent to $z^*\in S$ and by a normal
family argument there is a partial of $f_n$ convergent to $f\in A_\phi$, such
that $f|_\Lambda\equiv 0$, $f(z^*)\ne 0$ and this is not possible. Thus $z_n$
must be unbounded. Then we take the triplets $(D_n,\phi_n,\Lambda_n)$ associated
to $z_n$ and $D_n\to \D$ because $z_n\to\infty$ and the hyperbolic radius of
$D_n$ is $d(z_n,p)/2$. Since $\phi_n$ has bounded Laplacian, the mass of
$\Delta\phi_n$ restricted to any compact $K$ in $\D$ is bounded, thus we can
take a subsequence that converges weakly to a positive measure $\mu$ in $\D$
which satisfies $(1-|z|)^2\mu \simeq 1$ because all the mesures $\Delta\phi_n$
satisfy this inequalities with uniform constants. Let $\phi$ be such that
$\Delta\phi^*=\mu$. Since $\Lambda_n=D_n\cap \Lambda$ are all separated with
uniform bound, there is a weak limit $\Lambda^*$. The functions $f_n$ in the
disks can be modified by a factor $e^{g_n}$ in such a way that $h_n=f_ne^{g_n}$
satisfies $h_n(0)=1$ and $|h_n|\le e^{\phi_n+\Re(g_n)}$, if $n$ big enough and
$\sup_{\Lambda_n} |h_n| e^{-\phi_n+\Re(g_n)}\le 1/n$. We can add an harmonic
function $v$ to $\phi^*$ in such a way that $\phi_n+\Re(g_n)\to v+\phi^*$
uniformly  on compact sets. Thus $h_n$ has a partial convergent to $h\in
A_{\phi^*}$, $h(0)=1$ and $h|_{\Lambda^*}\equiv 0$ which was not possible by
assumption.

In the other direction, we assume that $\Lambda$ is a sampling sequence for
$A_\phi(S)$,  and $(\D,\phi^*,\Lambda^*)\in W(S,\phi,\Lambda)$. We want
to prove
that any $f\in A_\phi^*(\D)$ that vanishes in $\Lambda^*$ is identically $0$.
Take a sequence of points $z_n$ that escapes to infinity and 
$(D_n,\phi_n,\Lambda_n)$ the associated triple that converges weakly to
$(\D,\phi^*,\Lambda^*)$. As $\phi_n\to \phi^*$ and $\Lambda_n\to\Lambda^*$
uniformly on compact sets we can take a sequence of radii $s_n$ such that
$d(\Lambda\cap D(z_n,s_n),\Lambda^ *\cap D(0,s_n))<1/n$, $D(z_n,s_n)\subset
D(z_n,r_n)$ and $|\phi_n-\phi^*|\le 1/n$. If $f$ vanishes in $\Lambda^*$ that
means that $f$ is very small in $D(z_n,s_n)\cap \Lambda$. Assume that $f(0)=1$.
Take a cutoff function $\chi_n$ such that $\chi_n\equiv 0$ outside $D(z_n,s_n)$,
$\chi(z_n)=1$, and $\langle d\chi\rangle<\varepsilon_n$. Define
$g_n=f\chi_n-u_n$, where $\dbar u= f \dbar \chi_n$ is the solution estimates by
Theorem~\ref{dbar}. Clearly $g_n$ is small in all points of $\Sigma$ and it has
at least norm $1$. Thus we are contradicting the fact that $\Lambda$ is
sampling.

\end{proof}
Observe that one particular instance of finite Riemann surface, where we can
apply the result are the disks $D_i$ associated to the funnels with the metric
$\phi_i$. The final piece for the proof of Theorem~\ref{sampling} is then
\begin{lemma}\label{disc} If $S$ is  a finite Riemann surface, $\phi$ a
subharmonic function with bounded Laplacian and $\Lambda$ is a
uniformly separated sequence, then all possible weak limits coincide with the
weak limits of the disks associated to the surface, i.e,
\[
W(S,\phi,\Lambda)=W(D_1,\phi_1,\Lambda_1)\cup\cdots \cup
W(D_n,\phi_n,\Lambda_n).
\]
\end{lemma}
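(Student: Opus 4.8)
The plan is to prove both inclusions simultaneously, exploiting the fact pointed out above that each funnel $A_i$ is at the same time a funnel of $S$ and a subdomain of the disk $D_i$, together with the principle that a weak limit records only the geometry at \emph{bounded} hyperbolic distance from the escaping center. First I would reduce to a single funnel. Given $(\D,\phi^*,\Lambda^*)\in W(S,\phi,\Lambda)$ produced by a sequence $z_m$ with $d(p,z_m)\to\infty$, I pass to a subsequence for which all $z_m$ lie in one fixed funnel $A_i$; since $p$ lies in the core and the region $\{1<|z|\le e^{R_i/2}\}$ sits at bounded hyperbolic distance from $\gamma_i$, the condition $d(p,z_m)\to\infty$ forces $|z_m|\to e^{R_i}$, i.e.\ $z_m$ approaches the outer boundary $\tilde\gamma_i$. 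Conversely an element of $W(D_i,\phi_i,\Lambda_i)$ comes from centers $w_m\to\partial D_i$; as $\Lambda_i\subset\{1<|z|<e^{R_i}\}$ and $\partial D_i=\{|z|=e^{R_i}\}$, these $w_m$ also satisfy $|w_m|\to e^{R_i}$ and hence escape to infinity in $S$ through the outer part of the funnel. So both sides are generated by the same kind of centers living deep in the outer funnel, and the differing radii $d(p,z_m)/2$ and $d_{D_i}(p_i,w_m)/2$ both diverge, so the precise radius is irrelevant to the limit.

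The heart of the matter is that, near $\tilde\gamma_i$, the triplet data seen by $S$ and by $D_i$ agree. Fix $K>0$. Since the Poincar\'e metric blows up toward the boundary, the ball $D(z_m,K)$ has Euclidean diameter $\lesssim e^{R_i}-|z_m|\to 0$, so for $m$ large it is contained in $\{e^{R_i/2}<|z|<e^{R_i}\}$. On this region three things hold: (i) by Definition~\ref{associats} one has $\Delta\phi=\Delta\phi_i$ and $|\phi-\phi_i|<C$, so the two weights differ by a bounded harmonic function, which — exactly as in the proof of Theorem~\ref{beurling} — is absorbed by adding a harmonic function to the limit weight and leaves the weak limit of the measures $\Delta\phi_n$ unchanged; (ii) $\Lambda\cap D(z_m,K)$ is literally the same finite point set whether $A_i$ is viewed inside $S$ or inside $D_i$; and (iii), the key point, the two metrics become isometric in the limit. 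Because the Poincar\'e metric has curvature $-1$ on both $S$ and $D_i$, each ball $D(z_m,K)$ is isometric to the hyperbolic $K$-ball of $\D$, and the weak limit is obtained by transporting the weight and the sequence through this isometry (normalized by $z_m\mapsto 0$); the limit is therefore governed solely by the intrinsic hyperbolic distances inside $D(z_m,K)$.

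It remains to see that these intrinsic distances agree asymptotically. The circle $\{|z|=e^{R_i}\}$ is a common smooth boundary component of the annulus-funnel and of $D_i$, and the distance to it, $e^{R_i}-|z|$, is the same for both. By the classical boundary behaviour of the Poincar\'e metric near a smooth boundary arc, $\lambda_{A_i}(z)\,(e^{R_i}-|z|)\to 1$ and $\lambda_{D_i}(z)\,(e^{R_i}-|z|)\to 1$ as $|z|\to e^{R_i}$, uniformly on the circle, so $\lambda_{A_i}/\lambda_{D_i}\to 1$ uniformly on the shrinking balls $D(z_m,K)$. Uniform convergence of the conformal densities forces the pairwise hyperbolic distances of points at bounded distance from the center to converge, and, the curvature being $-1$ in both, the whole configuration inside $\D$ converges after the normalization $z_m\mapsto 0$. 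Hence the transported point sequences and weight measures have identical weak limits, so the triplet computed in $S$ coincides with the one computed in $D_i$. Running the argument in both directions yields the two inclusions and the claimed equality.

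I expect the genuine obstacle to be step (iii): upgrading the pointwise boundary asymptotics $\lambda(z)\,\dist(z,\partial)\to 1$ to \emph{uniform} convergence of the densities on the balls $D(z_m,K)$, and from there to convergence of the hyperbolic distances and thus of the transported weight and point configurations, uniformly as $z_m\to\tilde\gamma_i$. Everything else — the reduction to a single funnel, the identity $\Delta\phi=\Delta\phi_i$, and the harmless bounded discrepancy $|\phi-\phi_i|<C$ — is bookkeeping, but it is this metric comparison that makes ``the funnel inside $S$'' and ``the funnel inside $D_i$'' indistinguishable in the limit.
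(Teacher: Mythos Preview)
Your proposal is correct and follows exactly the route the paper takes: the paper's proof is the single sentence ``the metric in $D_i$ converges uniformly to the metric in $S$ as $z\to\partial D_i$, and in the definition of weak limits we only consider uniform convergence over compacts,'' and your argument is precisely a fleshed-out version of this, supplying the reduction to one funnel, the boundary asymptotics $\lambda(z)\,\dist(z,\partial)\to\text{const}$ that give $\lambda_{A_i}/\lambda_{D_i}\to 1$ uniformly, and the bookkeeping on $\Delta\phi=\Delta\phi_i$ and $\Lambda\cap A_i=\Lambda_i$. The point you flag as the genuine obstacle---upgrading the pointwise density asymptotics to uniform convergence on the balls $D(z_m,K)$---is exactly what the paper is asserting without proof, so you have identified the content correctly.
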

\begin{proof}
The proof amounts to the observation that the metric in $D_i$ converges
uniformly to the metric in $S$ as $z\to\partial \D_i$, and in the definition of
weak limits we only consider uniform convergence over compacts.
\end{proof}
Theorem~\ref{sampling} follows now immediately from
Theorem~\ref{beurling} and Lemmas \ref{uniqueness} and \ref{disc}. \qed

Now Theorem~\ref{sampling} and Theorem~\ref{interpolation} show that the
property of being a sampling/interpolating sequence are determined by the
behavior near the boundary, more precisely in the associated disks. In these
disks there is  a  precise description of the interpolating and sampling
sequences (see \cite{BerOrt95} and \cite{OrtSei98}) that can be transported to
the surface. If we rewrite it we get the density conditions of
Theorem~\ref{main}, but the disks are not hyperbolic disks on the surface, they
correspond to hyperbolic disks in disks $D_i$, but since the condition is only
relevant near the boundary, then the disks in both metrics look more an more
similar. Moreover the difference between the corresponding Green functions
converge to $0$ uniformly as we go to the boundary. Finally, as the sequence is
uniformly discrete and the Laplacian of the weight is bounded above and bellow,
the small difference is absorbed by the fact that the inequalities are strict
and this proves Theorem~\ref{main}. In fact it is possible to replace in the
definition of the density, \eqref{densities} the Green function $g_r$ of
$D(z,r)$ by the Green function $g$ of $S$, because as before $\sup_{w\in
D(z,r)}|g_r(z,w)-g(z,w)|\to 0$  as $z$ approaches the boundary.

\section{Some $L^p$-variants}\label{Lebesgue}
We have considered up to now pointwise growth restrictions. It is possible to
obtain from our Theorem other results in different Banach spaces of holomorphic
functions. Consider for instance the weighted Bergman spaces 
\[
A_\phi^p(S) = \{f\in \mathcal H (S);\ \int_S |f|^p e^{-\phi}\, dA<+\infty\},
\]
where $dA$ is s the hyperbolic area measure in $S$ and $p\in [1,\infty)$. The
natural problem in this context is the following: 
\begin{definition} Let $S$ be a finite Riemann surface, and let $\phi$ be a
subharmonic function with bounded Laplacian bigger than one, i.e.,
$1+\varepsilon <\Delta \phi <M$. 
\begin{itemize} 
\item A sequence $\Lambda\subset S$ is interpolating for $A_\phi^p(S)$ if for
any values $v_\lambda$ such that 
\[
\sum_{\lambda\in\Lambda} |v_\lambda|^p e^{-\phi(\lambda)}  <\infty
\] 
there is a function $f\in A_\phi^p(S)$ such that $f(\lambda)=v_\lambda$.

\end{itemize}
\end{definition}
The spaces $A^p_\phi$ can be empty if we only ask $\phi$ to be with positive
bounded Laplacian. It is then natural to require that the Laplacian is strictly
bigger than one so that the Laplacian plus the curvature of the metric in the
manifold is strictly positive and there are functions in the space (consider the
case of the disk $S=\D$ for instance).


Let $\phi_0$ be a subharmonic function in $S$ such that $\Delta \phi_0=1$. The
corresponding theorem will be 
\begin{theorem}\label{samplinglp} Let $S$ be a finite Riemann surface, and let
$\phi$ be a subharmonic function with bounded Laplacian strictly bigger than
one. Let $p\in [1,+\infty)$ and $\Lambda$ be a separated sequence. 
\begin{itemize}
\item The sequence $\Lambda$ is interpolating for $A_\phi^p(S)$ if and only if
$D^+_{(\phi-\phi_0)}(\Lambda)<1/p$.
\end{itemize}
\end{theorem}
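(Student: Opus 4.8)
The plan is to reduce Theorem~\ref{samplinglp} to the already-established uniform ($L^\infty$) theory and the known $L^p$ interpolation theorem on the disk, exactly mirroring the structure of Theorem~\ref{sampling} and Proposition~\ref{interpolation}. The key localization principle I would exploit is that interpolation for $A_\phi^p(S)$, just like in the uniform case, is a phenomenon that lives near the boundary, i.e. in the funnels $A_i$, which are simultaneously subdomains of $S$ and of the associated disks $D_i$ supplied by Definition~\ref{associats}. So the first step is to prove the analogue of Proposition~\ref{interpolation}: a separated sequence $\Lambda$ is interpolating for $A_\phi^p(S)$ if and only if each $\Lambda_i=\Lambda\cap A_i$ is interpolating for $A_{\phi_i}^p(D_i)$. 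The ``global implies local'' direction is immediate by restriction (since $|\phi-\phi_i|<C$ on the funnel the norms are comparable there). For ``local implies global'' I would reuse the cut-off-and-correct machinery from the proof of Proposition~\ref{interpolation}: solve the interpolation problem in each funnel, multiply by a cutoff $\chi$ supported in the funnel, and repair $\dbar(f\chi)=f\,\dbar\chi$ globally. The only change is that the uniform $\dbar$-estimate of Theorem~\ref{dbar} must be replaced by an $L^p$ version of the $\dbar$-solution; I would derive this the same way Theorem~\ref{dbar} was derived, transferring the funnel data to the disk, invoking the $L^p$ solvability of $\dbar$ on the disk with the weight $\phi_i$ (again from \cite{Ortega02}), and patching the compactly supported remainder using the multiplier $f$ with $|f|\simeq e^\phi$ from Lemma~\ref{multiplier}. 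Adding or removing finitely many points does not affect $L^p$-interpolation, by the Blaschke-type factors of Proposition~\ref{blashcke}, so the finite sets $F_i$ cause no trouble.

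Once interpolation is localized to the disks $D_i$, the second step is to quote the precise description of $L^p$-interpolating sequences on the disk. For the Bergman-type space $A_{\phi_i}^p(D_i)$ the theorem of \cite{BerOrt95}, \cite{OrtSei98} states that a separated $\Lambda_i$ is interpolating if and only if the upper density, computed against the \emph{reduced} weight, is strictly less than $1/p$; the reduction by the reference weight $\phi_0$ with $\Delta\phi_0=1$ is exactly what accounts for the curvature/measure term that makes the $L^p$ spaces nonempty (this is why the hypothesis $\Delta\phi>1$ enters). Concretely, I expect the correct density on the disk to be $D^+_{(\phi_i-\phi_0)}(\Lambda_i)<1/p$, where the numerator is $\sum g_r(z,\lambda)$ and the denominator is $\int g_r\, i\ddbar(\phi_i-\phi_0)$. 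The factor $1/p$ is the standard $L^p$ normalization: an $L^p$ condition allows a sequence roughly $p$ times as dense as the $L^\infty$ one because the defect in $\dbar$-solvability scales like the $p$-th root of the area.

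The third and final step is to transport these disk densities back to $S$ and verify they agree with $D^+_{(\phi-\phi_0)}(\Lambda)$ as defined on the surface. This is the passage already sketched at the end of Section~\ref{beurling}'s discussion: the hyperbolic metrics of $D_i$ and of $S$ converge uniformly on the funnel as one approaches $\partial\D_i$, the Green functions satisfy $\sup_{w\in D(z,r)}|g_r(z,w)-g(z,w)|\to 0$ as $z\to\partial S$, and $|\phi-\phi_i|<C$ with $\Delta\phi_i=\Delta\phi$ near the outer end. Since the density is a boundary quantity (a $\limsup$ over $z$ escaping to infinity and $r\to\infty$) these bounded, vanishing-at-the-boundary discrepancies do not affect the value of the $\limsup$. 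Here the separatedness of $\Lambda$ and the two-sided bound on $\Delta\phi$ are essential: they guarantee the density ratios are stable under the small perturbations, and the strict inequality $<1/p$ absorbs the error. The main obstacle I anticipate is the $L^p$ analogue of Theorem~\ref{dbar}: one must make sure the weighted $L^p$ $\dbar$-estimate on $S$ holds with the multiplier construction intact, in particular that replacing the weight $\phi$ by $\phi-\phi_0$ (equivalently, inserting the area/curvature factor that distinguishes $L^p$ from $L^\infty$) is compatible with the patching across the core and the funnels. Everything else is a faithful rerun of the uniform argument with $p$-th powers and area integrals in place of suprema.
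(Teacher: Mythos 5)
Your proposal is correct and follows essentially the same route as the paper: the paper's own proof is simply the remark that the argument is ``mutatis mutandis'' the $L^\infty$ one, with the key new ingredient being the $L^p$ analogue of Theorem~\ref{dbar} (stated as Theorem~\ref{dbarlp}), proved exactly as you describe --- solving in each funnel via Theorem~2 of \cite{Ortega02} and gluing the compactly supported remainder with the multiplier operator \eqref{pes}. Your expansion of the localization, the disk density $D^+_{(\phi_i-\phi_0)}<1/p$, and the boundary transport of the densities is a faithful (and more detailed) rendering of what the paper leaves implicit.
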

In the case of the unit disk $dA(z)=(1-|z|)^{-2}$ this description is
well-known, see  for instance \cite[Thm 2,3]{Seip98}.
\begin{proof}
The proof of the theorem is the same mutatis-mutandi as in the $L^\infty$
setting. The basic tool that allows us to glue the pieces together is the next
theorem which is the generalization of Theorem~\ref{dbar} and it is proved in
the same way:

\begin{theorem}\label{dbarlp}
Let $S$ be a finite Riemann surface, let $\phi$ be a subharmonic function with a
bounded  Laplacian strictly bigger than one and let $p\in [1,\infty)$. There is 
a constant $C=C(p,S)>0$ such that for any $(0,1)$-form $\omega$ on $S$ there is
a solution $u$ to the inhomogeneous Cauchy-Riemann equation $\dbar u=\omega$ in
$S$ with the estimate 
\[
\int_{S} |u(z)|^p e^{-\phi(z)}dA(z)\le C \int_{S} \langle\omega(z)\rangle^p
e^{-\phi(z)} dA(z),
\] 
whenever the right hand is finite. 
\end{theorem}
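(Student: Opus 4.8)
The plan is to reproduce the argument of Theorem~\ref{dbar} line by line, replacing the weighted $L^\infty$ estimate for $\dbar$ on the disk by its weighted $L^p$ analogue. As there, write $\omega_i$ for the restriction of $\omega$ to the funnel $A_i$ and transport it, through the standard coordinate chart, to a $(0,1)$-form on the disk $|z|<e^{R_i}$ supported in $1<|z|<e^{R_i}$. I would keep the weight $\phi_i$ of Definition~\ref{associats}, chosen (as one may, since $\Delta\phi>1$ on the funnel) with $\Delta\phi_i>1$ on the whole disk and $|\phi-\phi_i|<C$ on $1<|z|<e^{R_i}$. The strict lower bound $\Delta\phi_i>1$ is exactly the curvature condition (Laplacian plus the curvature of the metric $>0$) that keeps the disk space nontrivial and that is needed to invoke the weighted $L^p$ solution of $\dbar$ on the disk, the $L^p$ counterpart of \cite[Thm~2]{Ortega02}; it yields $u_i$ on $|z|<e^{R_i}$ with $\int|u_i|^p e^{-\phi_i}\,dA\lesssim\int\langle\omega_i\rangle^p e^{-\phi_i}\,dA$.

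Next I would localise exactly as in Theorem~\ref{dbar}. Cutting $u_i$ off by $\chi_i$ (equal to one for $|z|>e^{R_i/2}$, supported in the funnel) and extending by zero gives $\tilde u_i$ on $S$; since $|\phi-\phi_i|<C$ on the support, the disk estimate transfers to $\int_S|\tilde u_i|^p e^{-\phi}\,dA\lesssim\int_S\langle\omega\rangle^p e^{-\phi}\,dA$, and $\dbar\tilde u_i$ agrees with $\omega$ on the outer part of $A_i$ where $\chi_i\equiv 1$. Hence $\omega_k=\omega-\sum_i\dbar\tilde u_i$ has compact support $K$ (the core together with the transition annuli) and satisfies $\int_S\langle\omega_k\rangle^p e^{-\phi}\,dA\lesssim\int_S\langle\omega\rangle^p e^{-\phi}\,dA$. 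It remains to solve $\dbar v=\omega_k$ with the target bound, the final solution being $u=\sum_i\tilde u_i+v$.

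For the compactly supported remainder I would again twist the bounded operator of Lemma~\ref{dbaracotat} by a multiplier, now adapted to the $L^p$ normalisation. Since $\phi/p$ is subharmonic with bounded Laplacian, Lemma~\ref{multiplier} supplies $f\in\H(S)$ with $|f|\simeq e^{\phi/p}$ and no zeros on $K$; setting $R[\omega_k]=f\,T[\omega_k/f]$ solves $\dbar R[\omega_k]=\omega_k$, and the choice $|f|\simeq e^{\phi/p}$ is made precisely so that $|R[\omega_k]|^p e^{-\phi}\simeq|T[\omega_k/f]|^p$ pointwise. On $K$ the weight $e^{-\phi}$ is comparable to a constant and $K$ has finite area, so the right-hand side of the desired inequality is comparable to $\|\omega_k\|_{L^p(K)}^p$.

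The hard part, and the only place where the $L^\infty$ proof does not transcribe verbatim, is this last estimate. The operator $T$ delivers only the pointwise bound $\sup_S|T[\omega_k/f]|\le C_K\sup_K\langle\omega_k/f\rangle$, which is worthless against the infinite hyperbolic area, because $e^{-\phi}\,dA$ need not be a finite measure merely when $\Delta\phi>1$ (a model computation on the disk already shows this). What is genuinely required is a solution operator for compactly supported data bounded in the \emph{unweighted} $L^p(S,dA)$, equivalently a solution of $\dbar v=\omega_k$ that decays toward $\partial S$ fast enough to lie in $A^p_\phi(S)$. I would build it by solving $\dbar$ locally near $K$ with interior $L^p$ (Calder\'on--Zygmund) regularity and then subtracting an element of $A^p_\phi(S)$ to cancel the non-decaying part produced in the funnels, the existence of such a correction resting on the richness of the space that $\Delta\phi>1$ guarantees; for $p=2$ the cleaner route is a twisted Hörmander-type estimate, strict positivity coming again from $\Delta\phi>1$. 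Verifying that this operator yields the global weighted $L^p$ bound — i.e.\ controlling, in each funnel, the holomorphic solution it produces — is the crux; once it replaces $T$ in the twist and $u=\sum_i\tilde u_i+v$ is assembled, the remaining estimates are routine.
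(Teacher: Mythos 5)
Your reduction to the funnels via \cite[Thm~2]{Ortega02} and the decomposition $u=\sum_i\tilde u_i+v$ with a compactly supported remainder $\omega_k$ is exactly the route the paper takes (its own proof of this theorem is a two-line reference back to Theorem~\ref{dbar}, solving in each funnel and then gluing ``with the operator \eqref{pes}''). You are also right that the last step does not transcribe verbatim: with your twist $|f|\simeq e^{\phi/p}$ one gets $\int_S|fT[\omega_k/f]|^pe^{-\phi}\,dA\simeq\int_S|T[\omega_k/f]|^p\,dA$, and with the literal operator \eqref{pes} ($|f|\simeq e^{\phi}$) one gets a factor $e^{(p-1)\phi}$; in either case the sup bound on $T$ is integrated against an infinite measure. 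So the worry is legitimate. The problem is that what you offer in its place is not a proof: you explicitly leave the key estimate (``the crux'') unverified, the proposal to ``subtract an element of $A^p_\phi(S)$ to cancel the non-decaying part'' is circular as stated (producing such a corrector in $A^p_\phi$ with the right boundary behaviour is essentially the assertion being proved), and the H\"ormander route only covers $p=2$ while the theorem is claimed for all $p\in[1,\infty)$.

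The missing idea is much cheaper and is exactly where the hypothesis $\Delta\phi>1+\varepsilon$ enters: choose the multiplier so that $f$ itself lies in $A^p_\phi(S)$. Apply Lemma~\ref{multiplier} to the weight $\psi=\bigl(\phi-(1+\delta)\phi_0\bigr)/p$ with $0<\delta<\varepsilon$ (here $\Delta\phi_0=1$, so $\Delta\psi$ is still bounded above and below away from zero), obtaining $f$ zero-free on $K$ with $|f|\simeq e^{\psi}$ there and globally off its zero set. Then $\int_S|f|^pe^{-\phi}\,dA\simeq\int_Se^{-(1+\delta)\phi_0}\,dA<\infty$, hence
\[
\int_S\bigl|f\,T[\omega_k/f]\bigr|^pe^{-\phi}\,dA\le\Bigl(\sup_S|T[\omega_k/f]|\Bigr)^p\int_S|f|^pe^{-\phi}\,dA\lesssim \sup_K\langle\omega_k\rangle^p,
\]
which is the finite, controlled bound you were missing. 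One residual point remains even after this fix: $\sup_K\langle\omega_k\rangle$ contains the term $\sup_K\langle\omega\rangle$, which is not dominated by the $L^p$ datum, so Lemma~\ref{dbaracotat} must be upgraded to an $L^p$ statement for compactly supported data (the Cauchy-type kernel on the double is bounded from $L^p(K)$ to $L^p$ of a neighbourhood, and the maximum principle controls $T[w]$ on $S\setminus K$); this is routine but has to be said, and neither your write-up nor the paper's sketch says it.
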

The proof of this result is again the same as in Theorem~\ref{dbar}. We can 
separately solve the C-R equation in each funnel  using Theorem~2 from
\cite{Ortega02}.  We glue them together with a C-R equation with data that has
compact support that can be solved with the operator \eqref{pes}. 
%
%
\end{proof}

\def\cprime{$'$}
\providecommand{\bysame}{\leavevmode\hbox to3em{\hrulefill}\thinspace}
\providecommand{\MR}{\relax\ifhmode\unskip\space\fi MR }
\providecommand{\MRhref}[2]{%
  \href{http://www.ams.org/mathscinet-getitem?mr=#1}{#2}
}
\providecommand{\href}[2]{#2}

\end{document}